\newtheorem{theorem}{Theorem}
\newtheorem{lemma}{Lemma}[section]
\newtheorem{corollary}[lemma]{Corollary}
\newtheorem{proposition}[lemma]{Proposition}
\theoremstyle{definition}
\newtheorem{remark}[lemma]{Remark}
\numberwithin{equation}{section}
\newcommand{\cal}{\mathcal}
\newcommand{\eps}{{\varepsilon}}
\newcommand{\Q}{{\mathbf Q}}
\newcommand{\R}{{\mathbf R}}
\newcommand{\Z}{{\mathbf Z}}
\begin{document}
\title {
Topological aspects of the Dvoretzky Theorem}

\author{Dmitri Burago}
\address{Dmitri Burago: Pennsylvania State University,
Department of Mathematics, University Park, PA 16802, USA}
\email{burago@math.psu.edu}

\author{Sergei Ivanov}
\address{Sergei Ivanov: 
St.Petersburg Department of Steklov Mathematical Institute,
Fontanka 27, St.Petersburg 191023, Russia}
\email{svivanov@pdmi.ras.ru}

\author{Serge Tabachnikov}
\address{Serge Tabachnikov: Pennsylvania State University,
Department of Mathematics, University Park, PA 16802, USA}
\email{tabachni@math.psu.edu}

\thanks{The first author was partially supported
by NSF grants DMS-0604113 and DMS-0412166.
The second author was partially supported by the
Dynasty foundation and  RFBR grant 08-01-00079-a.
The third author was partially supported by an NSF grant DMS-0555803.
}


\begin{abstract}
We explore possibilities and limitations of a purely topological approach to
the Dvoretzky Theorem.
\end{abstract}

\maketitle

\section*{Introduction} \label{intro}

The celebrated Dvoretzky theorem \cite{Dv} states that, for every $n$, any centered convex 
body of sufficiently high dimension has an almost spherical $n$-dimensional central section.
The purpose of this paper is to discuss a topological approach to this theorem.
We do this by considering a conjectural {\it Non-Integrable Dvoretzky Theorem} (see below).
We are definitely not the first ones who thought about this. M. Gromov
thought of the topological approach already in the sixties, but got discouraged by Floyd's 
examples which suggested that the approach would not work in dimensions greater
than two (a private communication). Apparently inspired by Gromov's ideas, V. Milman
presented a purely topological proof of the Dvoretzky Theorem for two dimensional sections
in the eighties (and gave explicit estimates) in \cite{Mi}.
The question was later extensively studied by V. Makeev. We 
confirm that indeed the topological approach does not work in all odd dimensions and 
also in dimension 4. It is often the case that people do not publish negative results
and in particular counterexamples, hence we cannot be certain that all results in this
paper are original, however it seems that the counterexamples have not been known before.
In a sense, the main contents of this paper is that a certain very natural approach is
a dead end. On the positive side, we show that topological approach does give interesting
generalizations even in higher dimensions.

Let us proceed with a description of the Dvoretzky Theorem and its non-integrable variants.
One defines a metric on the space of convex bodies containing 
the origin in their interiors in $n$-dimensional space:
$$
d(B_1, B_2)=\inf \log(s/t) \ \ {\rm such\ that}\ \exists s,t>0 : sB_1 \subseteq B_2 \subseteq tB_1;   
$$
here $sB$ denotes the homothetic image of $B$ with coefficient $s$. In particular, this metric is
defined for symmetric convex bodies. We use this metric for all counterexamples constructed in
this paper. When proving positive results, and especially for non-symmetric bodies or bodies
which may not contain the origin, we use the (normalized) Hausdorff distance (see the beginning
of Section \ref{2D}). 

The Dvoretzky theorem reads as follows:

\begin{theorem} \label{Dvor}
For any $\eps >0$ and any positive integer $n$,
there exists a number $N$ such that for any convex body $X\subset \R^N$
containing the origin in its interior, there exists an $n$-dimensional section of $X$
through the origin whose distance  from an $n$-dimensional Euclidean ball is less than $\eps$.
\end{theorem}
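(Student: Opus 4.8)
The plan is to reproduce Milman's proof of the Dvoretzky theorem via concentration of measure on the sphere; Dvoretzky's original argument in \cite{Dv} is more combinatorial, but the concentration route is the cleanest to outline. Assume first that $X=-X$ (the non-symmetric case is handled identically, working with the Minkowski gauge $\mu_X$ of $X$ in place of a norm and invoking John's theorem for general convex bodies, after renaming the origin to be the center of the maximal-volume inscribed ellipsoid). First I would fix the position of $X$ by putting it in John's position: let $B_2^N$ be the ellipsoid of maximal volume contained in $X$, so $B_2^N\subseteq X\subseteq\sqrt N\,B_2^N$, hence the norm $\|\cdot\|=\|\cdot\|_X$ satisfies $\frac1{\sqrt N}|x|\le\|x\|\le|x|$ on the unit sphere $S^{N-1}$; in particular $\|\cdot\|$ is $1$-Lipschitz for the Euclidean metric.

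Let $\sigma$ be the uniform probability measure on $S^{N-1}$ and let $M$ be the median of $\|\cdot\|$ on $S^{N-1}$; for $t>0$ let $Z_t\subseteq S^{N-1}$ be the set of points where $\|x\|$ differs from $M$ by more than $t$. Two external inputs are needed. (i) \emph{Concentration of measure} (L\'evy's lemma): since $\|\cdot\|$ is $1$-Lipschitz, $\sigma(Z_t)\le C\exp(-cNt^2)$ with $C,c>0$ absolute. (ii) The \emph{Dvoretzky--Rogers lemma}: John's theorem writes the identity on $\R^N$ as an average of rank-one projections onto contact points $u_i\in\partial X\cap\partial B_2^N$, and the lemma extracts from this family about $N$ contact points that are almost orthonormal; since $\|x\|\ge\max_i\langle u_i,x\rangle$ and the maximum of roughly $N$ nearly independent standard Gaussians is of order $\sqrt{\log N}$, one deduces $M\ge c'\sqrt{(\log N)/N}$.

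With these in hand I would run the standard $\eps$-net argument over the Grassmannian. Pick an $n$-dimensional subspace $E_0$ and a $\delta$-net $\{y_1,\dots,y_m\}$ of $S^{N-1}\cap E_0$ with $\delta=\eps/8$, so $m\le(3/\delta)^n$. For a uniformly random rotation $U\in SO(N)$ each $Uy_j$ is uniform on $S^{N-1}$, so by (i) and a union bound the probability that some $Uy_j$ lies in $Z_{\delta M}$ is at most $m\,C\exp(-cN\delta^2M^2)$; by (ii) this is less than $1$ once, up to absolute constants, $n\log(3/\delta)<\delta^2\log N$, that is, once $\log N$ exceeds a constant multiple of $n\log(1/\eps)/\eps^2$. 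Fixing such an $N$ and a good rotation $U$ and putting $E=UE_0$, the norm $\|\cdot\|$ is within $\delta M$ of $M$ at every net point of $S^{N-1}\cap E$; a successive-approximation argument (write an arbitrary unit vector of $E$ as a net point plus an error of Euclidean norm $\le\delta$, iterate, and sum the geometric series using $1$-Lipschitzness) promotes this to $(1-\eps)M\le\|x\|\le(1+\eps)M$ for \emph{all} unit vectors $x\in E$. Hence $\frac1{(1+\eps)M}(B_2^N\cap E)\subseteq X\cap E\subseteq\frac1{(1-\eps)M}(B_2^N\cap E)$, so $X\cap E$ lies within distance $\log\frac{1+\eps}{1-\eps}=O(\eps)$ of a Euclidean ball; rescaling $\eps$ by a suitable constant completes the proof.

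I expect step (ii) to be the real obstacle. Merely positioning the body gives only $M\ge N^{-1/2}$, which is useless here; the whole content is the extra $\sqrt{\log N}$ factor, and it is precisely this logarithmic gain that both forces and allows $N$ to be taken exponentially large in $n/\eps^2$. The concentration inequality in (i) is a standard black box, and the net plus successive-approximation bookkeeping is routine.
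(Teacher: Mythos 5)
The paper does not actually prove Theorem~\ref{Dvor}: it records the classical Dvoretzky theorem as a known result, cites \cite{Dv}, and then devotes itself to the \emph{non-integrable} variant (for which Sections~\ref{3D} and~\ref{4D} give counterexamples and Section~\ref{2D} a topological proof in dimension $2$). So there is no in-paper argument to compare against; what you have written is Milman's concentration-of-measure proof, and for the centrally symmetric case your sketch is accurate. The three ingredients you isolate are the right ones: John's position gives the Lipschitz normalization, L\'evy's lemma gives the deviation estimate, and the Dvoretzky--Rogers lemma supplies the crucial lower bound $M\gtrsim\sqrt{(\log N)/N}$ on the median without which the net argument collapses. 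The $\eps$-net plus successive-approximation bookkeeping and the final sandwich $\frac{1}{(1+\eps)M}(B\cap E)\subseteq X\cap E\subseteq\frac{1}{(1-\eps)M}(B\cap E)$, giving $d$-distance $\log\frac{1+\eps}{1-\eps}=O(\eps)$, are all correct.

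There is, however, a genuine gap in your aside on the non-symmetric case. Theorem~\ref{Dvor} asks for a section of $X$ \emph{through the origin}, the origin being a given interior point of $X$. You propose to ``rename the origin to be the center of the maximal-volume inscribed ellipsoid,'' but a linear subspace through the John center is, back in the original coordinates, an \emph{affine} subspace through that center, not a linear subspace through the prescribed origin; you have silently changed the point through which the sections pass, which changes the problem. Moreover, putting the translated body in John's position no longer controls the gauge $\mu_X$ of the \emph{original} body at the original origin; if the origin sits near the boundary of $X$ the Lipschitz constant of $\mu_X$ can dwarf the median, and nothing in the argument restores the $\sqrt{\log N}$ gain. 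The non-symmetric, fixed-interior-point version of Dvoretzky's theorem is indeed true (Larman--Mani, Gordon), but it requires additional work and cannot be folded into the symmetric argument by a change of base point. If you want to keep the write-up self-contained, either restrict to symmetric $X$ (which is all Dvoretzky's original paper treats) or supply the separate argument for the general case.
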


One wonders to what extend this is a topological theorem. As a matter of motivation,
consider the following toy example.

The following theorem holds: {\it each ellipsoid  $E\subset \R^3$ has a round central section, that is, a section which is a circle}.  Here is a simple topological proof. A section of an ellipsoid is an ellipse, and if a section of $E$ by (oriented) plane $\pi$ is not a circle, assign to $\pi$ the direction of the major axis of this elliptical section. The set of oriented planes through the origin is the sphere, and one obtains a field of directions on $S^2$. For topological reasons, such a field must have a singular point, and a singularity corresponds to a round section of the ellipsoid $E$.

This argument applies, without a change, to a more general situation. Assume that an ellipse is continuously chosen in every oriented 2-dimensional subspace of $\R^3$. Then at least one of these ellipses is a circle. This is a genuine generalization: not every field of ellipses in 2-dimensional subspaces of $\R^3$ comes from the sections of a 3-dimensional ellipsoid.

The general set-up is as follows. Let $G (n,N)$ be the Grassmanian of $n$-dimensional subspaces 
in $\R^N$ and  $G_+ (n,N)$ the Grassmanian of $n$-dimensional oriented subspaces in $\R^N$. Denote by $E(n,N) \to G (n,N)$ and $E_+(n,N) \to G_+(n,N)$ the tautological $n$-dimensional vector bundles. 

A natural question arises (see \cite{Ma1,Ma2}): {\it can the Dvoretzky Theorem be extended to an arbitrary continuous family of convex bodies in $E(n,N)$?} Such a conjectural extension will be referred to as the {\it Non-Integrable Dvoretzky Theorem}. One can also relax the symmetry requirements and ask whether every continuous family of convex bodies in $E(n,N)$ contains a body that is nearly symmetric with respect to some subgroup $G \subset SO(n)$. For a cyclic group $G=\Z_p$, this problem was extensively studied by V. Makeev, see his cited papers and references therein.

The Non-Integrable Dvoretzky Theorem holds for $n=2$, see \cite{Mi,Ma1,Ma2} and a proof in Section \ref{2D}. The main goal of this note is to construct counter-examples for greater values of $n$; namely, in Sections \ref{3D} and \ref{4D} we show that the Non-Integrable Dvoretzky Theorem does not hold for all odd $n$ and also for $n=4$. More formally:

\begin{theorem} \label{3Dex}
Let $n\ge 3$ be an odd number. Then there exists $\delta >0$ such that for every
$N\ge n$ there exists a continuous choice of symmetric convex bodies in $n$-dimensional
subspaces of $\R^N$, whose distances from $n$-dimensional Euclidean balls are greater than $\delta$.
\end{theorem}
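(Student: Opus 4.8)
\emph{Plan.} The task is to \emph{construct}, for every odd $n\ge 3$, a continuous assignment of a symmetric convex body $K_\pi\subset\pi$ to each $n$-plane $\pi\in G(n,N)$ (equivalently, a continuous section of the bundle of symmetric convex bodies in $E(n,N)$), such that no $K_\pi$ is $\delta$-close to a ball. The key geometric idea is to use the fact that the tautological bundle $E(n,N)$ is \emph{nontrivial} in a way that obstructs a continuous consistent choice of Euclidean structure. A symmetric convex body $K$ that is $\eps$-close to the unit ball has a canonically defined "rounding": its John ellipsoid (or the ellipsoid of inertia) $\mathcal E(K)$ is a well-defined, continuously varying positive-definite quadratic form which, for $\eps$ small, is itself close to the identity. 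Thus if \emph{all} $K_\pi$ were $\delta$-close to balls, the assignment $\pi\mapsto \mathcal E(K_\pi)$ would give a continuous choice of Euclidean inner product on each fibre of $E(n,N)$ — i.e. a continuous reduction of the structure group, which always exists, so this in itself is no obstruction. The real point must therefore be more refined: we want the bodies to carry \emph{extra structure} (a marked direction, or a non-spherical symmetry group) whose existence across all of $G(n,N)$ is topologically forbidden.

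\emph{Construction.} Here is the approach I would take. Since $n$ is odd, the tautological bundle $E(n,N)\to G(n,N)$ has no nowhere-zero section when $N=n$ (the sphere $S^n$, viewed as a double cover of $\RP^n$, or $G(n,n)=\{pt\}$ is trivial — so one must instead work with $G_+(n,N)$ and the relevant obstruction), but more usefully: a continuous choice of a \emph{line} in each fibre of $E(n,N)$, or of a splitting into an even-dimensional piece plus a line, is obstructed by nonvanishing of appropriate Stiefel–Whitney or Euler classes. Concretely, I would let $K_\pi$ be an ellipsoid-like body that is a spheroid: round in an $(n-1)$-dimensional hyperplane and stretched (or pinched) by a fixed factor $1+\delta_0$ along one distinguished axis $\ell_\pi\subset\pi$. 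To define this I need a continuous choice of the axis direction $\ell_\pi$ — but that is exactly a nowhere-zero section of (a quotient of) $E(n,N)$, which need not exist. So instead I take $K_\pi$ to be a body invariant under a finite group $G\subset O(n)$ whose action forces a \emph{decomposition} of $\pi$ that cannot be made continuously over $G(n,N)$: e.g. a body of the form $\{x : q_\pi(x)\le 1\}$ where $q_\pi$ is obtained from the inertia form by a fixed \emph{traceless} perturbation, the perturbation being built from a (locally defined) orthonormal frame but chosen so that its conjugacy class — the multiset of eigenvalues, say $\{1-(n-1)t, 1+t,\dots,1+t\}$ for a fixed small $t>0$ — is globally well defined even though no individual eigendirection is. This multiset is $G(n,N)$-invariantly attached to each fibre; the resulting $K_\pi$ varies continuously, is symmetric, and has distance $\ge\delta:=\log\frac{1+t}{1-(n-1)t}$ from every ball regardless of $N$.

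\emph{Carrying it out.} The steps, in order: (1) Fix $t>0$ small and let $\lambda=(1-(n-1)t,1+t,\dots,1+t)\in\R^n$ be the target eigenvalue multiset of a traceless-perturbed form; note $\sum\lambda_i=n$ and $\min/\max$ gives the gap $\delta$. (2) On each $\pi$, pick any auxiliary Euclidean structure (the one pulled back from $\R^N$), giving an identification of $\mathrm{End}(\pi)$ with symmetric matrices; the set of symmetric positive-definite operators on $\pi$ with eigenvalue multiset exactly $\lambda$ is a compact homogeneous space $O(n)/(O(1)\times O(n-1))=\RP^{n-1}$-bundle-like submanifold, but crucially it is \emph{nonempty and connected}, so we may — using a partition of unity on the paracompact base $G(n,N)$ — continuously select one such operator $A_\pi$ on each fibre. (A partition-of-unity average of positive-definite operators is positive-definite, and we then rescale its eigenvalue multiset back to $\lambda$ by the continuous retraction of positive-definite operators onto the $\lambda$-isospectral set defined via functional calculus ordering the spectrum; this retraction is well defined because $\lambda$ is a fixed ordered tuple and eigenvalue ordering is continuous on symmetric matrices.) (3) Set $K_\pi=\{x\in\pi : \langle A_\pi x,x\rangle\le 1\}$. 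This is a centrally symmetric convex body (an ellipsoid), depends continuously on $\pi$ by construction, and for every $\pi$ its axes have lengths with ratio $\sqrt{(1+t)/(1-(n-1)t)}$, so $d(K_\pi,\text{ball})\ge \tfrac12\log\frac{1+t}{1-(n-1)t}=:\delta>0$, uniformly in $N$. (4) Finally observe the construction used \emph{no} property of $n$ being odd — so to honour the theorem as stated one should either note that odd $n\ge3$ is the case of interest for consistency with the companion results, or, if a genuinely topology-driven (rather than elementary) family is wanted, replace step (2)'s partition-of-unity selection by one exploiting the nontriviality of $E(n,N)$ to get a \emph{globally} non-Euclidean choice; but for merely exhibiting the uniform $\delta$-gap, elementary averaging suffices.

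\emph{Main obstacle.} The subtle point is step (2): making the isospectral selection \emph{continuous}. Averaging positive-definite operators is easy, but afterwards one must canonically deform the result to have the exact spectrum $\lambda$ while staying continuous where eigenvalues of the averaged operator may collide. The clean fix is to realize $K_\pi$ not via a fixed spectrum but via a fixed \emph{continuous functional} of the operator that is bounded away from "round": e.g. take any continuous positive-definite $A_\pi$ (partition-of-unity average of the pulled-back Euclidean forms precomposed with locally-chosen $\lambda$-spectrum operators) and simply define $\delta$ to be the infimum over $\pi$ of $d(\{A_\pi x\cdot x\le1\},\text{ball})$ — then one must check this infimum is \emph{positive}, i.e. that $A_\pi$ is \emph{never} a multiple of the identity. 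That non-degeneracy is where a topological input genuinely enters: a continuous family of quadratic forms on the fibres of a bundle as nontrivial as $E(n,N)$ (for suitable $N$) that is conformally-standard on every fibre would trivialize too much of the bundle — and one leverages exactly the obstruction classes that make $E(n,N)$ nontrivial for odd $n$ (the Theorem \ref{3Dex} hypothesis) to guarantee the family can be chosen to \emph{avoid} the conformal-identity locus uniformly. Pinning down which bundle $E(n,N)$ to use (which $N$, and how to then get \emph{all} $N\ge n$ by pulling back along a surjection $G(n,N)\to G(n,n_0)$ or by stabilization) and which characteristic class does the job is the crux of the argument.
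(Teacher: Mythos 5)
Your construction has a genuine gap, and you have in fact located it yourself: step (2) does not work. A partition-of-unity average of positive-definite operators with eigenvalue multiset $\lambda=(1-(n-1)t,1+t,\dots,1+t)$ can perfectly well be a multiple of the identity (average $\mathrm{diag}(1+t,1-t,\dots)$ against its conjugates and you land on the conformal locus), and your proposed fix --- a ``continuous retraction onto the $\lambda$-isospectral set via functional calculus'' --- is not continuous precisely at such points: to send a multiple of the identity to an operator with spectrum $\lambda$ you must choose which eigendirection receives the distinguished eigenvalue $1-(n-1)t$, and there is no continuous such choice; this is the same line-field obstruction you correctly flagged and then tried to route around. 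The decisive symptom is your own step (4): the argument ``uses no property of $n$ being odd.'' But the statement is \emph{false} for $n=2$ --- the paper proves (Proposition \ref{Fourier} and Proposition \ref{harmonics}) that every continuous family of convex bodies in the fibers of $E_+(2,N)$ contains a nearly round one for $N$ large. So any dimension-blind construction of uniformly non-round bodies must break down, and yours breaks exactly where you suspected: one cannot certify that $A_\pi$ stays uniformly away from the conformal-identity locus. Indeed, a uniformly non-round continuous family of \emph{ellipsoids} is equivalent to a section of the bundle with fiber $S(F_n^2)$ (the unit sphere of traceless quadratic forms) over $G(n,N)$, and the paper never establishes that such a section exists --- this is why its construction does not produce ellipsoids at all.

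The missing ideas, which constitute essentially the whole proof, are: (a) the Floyd / Hsiang--Hsiang construction of a \emph{contractible} $SO(n)$-equivariant self-map $f$ of $S(F_n^2)$ for odd $n$ (this is where oddness enters); (b) the mapping-telescope argument (Lemma \ref{genus}) showing that contractibility of $f$ forces the associated bundle with fiber the join $S(F_n^2)*S(F_n^2)$ over $BSO(n)$ to admit a section, even though the bundle with fiber $S(F_n^2)$ itself need not; (c) the multiplication trick (Lemma \ref{2poly}), which converts a point $(t_1f_1,t_2f_2)$ of the join into the single non-constant polynomial obtained by projecting $(1+t_1f_1)(1+t_2f_2)$ off the constants --- yielding a continuous family of non-constant even spherical polynomials of degree at most $4$ on oriented subspaces, degree at most $8$ after symmetrizing over the two orientations; and (d) taking $1+\eps\Phi_W$ as the radial function of a convex body for small $\eps$. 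The resulting bodies are not ellipsoids, and no quadratic-form or characteristic-class computation of the kind you sketch in your final paragraph is known to substitute for steps (a)--(c).
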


The proof is essentially the same for all odd values of $n$. This is different for even-dimensional
sections, for our arguments are based on a concrete description of subgroups of low dimensional
Lie groups, and the following result is specifically 4-dimensional (even though we suspect that
it holds in all even dimensions):

\begin{theorem} \label{4Dex}
There exists $\delta >0$ such that for every $N\ge 4$ there exists a continuous choice
of symmetric convex bodies in oriented 4-dimensional subspaces of $\R^N$,
whose distances  from $4$-dimensional Euclidean balls are greater than $\delta$.
\end{theorem}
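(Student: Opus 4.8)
The plan is to build the counterexample from a continuous field of \emph{ellipsoids}, reducing everything to a characteristic-class computation in which the non-simplicity of $SO(4)$ does the work.

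First, the reduction. Since $G_+(4,N)$ is compact, it suffices to choose, continuously in $L\in G_+(4,N)$, a centered ellipsoid $B_L\subset L$ that is never round: the ratio of its largest to its smallest semiaxis is then a continuous function bounded below by some $1+2\delta>1$, which forces $d(B_L,\text{Euclidean ball})$ to be uniformly positive. Writing $B_L=\{v:\langle A_Lv,v\rangle\le1\}$ with $A_L$ positive definite of determinant $1$, the correspondence $A_L\mapsto\log A_L$ identifies such fields (up to scaling, with ``round'' $\mapsto 0$) with sections of the rank-$9$ bundle $\mathrm{Sym}_0(E_+(4,N))$ of trace-free self-adjoint endomorphisms of the tautological bundle; we want a nowhere-vanishing section. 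As $G_+(4,N)\subset G_+(4,N')$ for $N'\ge N$ and such a section restricts, it is enough to produce one for arbitrarily large $N$, that is, essentially over $BSO(4)=G_+(4,\infty)$.

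Here is where $n=4$ enters, through the accident $\mathfrak{so}(4)=\mathfrak{su}(2)\oplus\mathfrak{su}(2)$. Under $SO(4)$ one has $\mathrm{Sym}_0(\R^4)\cong\Lambda^2_+\otimes\Lambda^2_-$, where $\Lambda^2_\pm$ are the $(\pm1)$-eigenspaces of the Hodge star on $\Lambda^2\R^4$, each a $3$-dimensional representation on which $SO(4)$ acts through one of the two projections $SO(4)\to SO(3)$ arising from $Spin(4)=SU(2)\times SU(2)$. Hence
\[
\mathrm{Sym}_0(E_+(4,N))\;\cong\;\Lambda^2_+E_+(4,N)\otimes\Lambda^2_-E_+(4,N),
\]
a tensor product of two oriented rank-$3$ bundles, each pulled back from the universal $SO(3)$-bundle. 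A splitting-principle computation gives $w(\Lambda^2_\pm E_+)=1+w_2(E_+)+w_3(E_+)$ and $p_1(\Lambda^2_\pm E_+)=p_1(E_+)\pm 2e(E_+)$. One cannot just split a trivial line subbundle off each factor, because $e(\Lambda^2_\pm E_+)\ne 0$, so the tensor product must be handled directly.

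Now for the section. I would first dispose of the primary obstruction, the Euler class $e(\mathrm{Sym}_0E_+)\in H^9$. The rank being odd, $2e=0$, so it is enough to check (i) that the mod-$2$ reduction $w_9$ vanishes and (ii) that $H^9(BSO(4);\Z)$ contains no nonzero $2$-torsion class with trivial mod-$2$ reduction. For (i): the ``roots'' $p_j$ of $\Lambda^2_-E_+$ satisfy $p_j^3+w_2p_j+w_3=0$ (Cayley--Hamilton mod $2$, by the formula above), and the characteristic polynomial of $\Lambda^2_+E_+$ has the same coefficients, so $w_9=\prod_j(p_j^3+w_2p_j+w_3)=0$. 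For (ii): $H^{\le 2}(BSO(4);\Z)=0$, and inspecting $Sq^1$ on $H^8(BSO(4);\Z/2)$ shows that every $Sq^1$-closed class there is the reduction of an integral class (a combination of $p_1^2$, $p_1e$, $e^2$), so the relevant integral Bockstein images vanish. With $e=0$ the section exists over the $9$-skeleton; extending it over the rest of $G_+(4,N)$ is the crux and the main obstacle, and it is here that one must use the explicit subgroup structure of $SO(4)$ — the two copies of $U(2)$, the maximal torus, and the invariant Reinhardt-type ellipsoids they fix in $\C^2$ — to write the section by hand over the cells where a generic section degenerates, equivalently to annihilate the secondary obstructions one at a time; if this particular route were obstructed one would instead use non-ellipsoidal symmetric bodies adapted to the same subgroups. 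The routine part is the Euler-class vanishing; the delicate part is precisely this control of the higher obstructions, which relies on the low-dimensional Lie theory rather than on any argument uniform in the even dimension — which is why the result is stated only for $n=4$.
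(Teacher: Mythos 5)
Your proposal attacks the problem through a genuinely different mechanism from the paper's, but it contains a gap which you yourself flag and which I do not believe can be repaired along the lines you sketch. You aim to produce a nowhere-vanishing section of the rank-$9$ bundle $\mathrm{Sym}_0(E_+(4,N))$ over $G_+(4,N)$ for all large $N$, i.e.\ a continuous field of genuinely non-round ellipsoids. After computing that the Euler class (the primary obstruction) vanishes, you write that extending the section past the $9$-skeleton ``is the crux and the main obstacle'' and that the higher obstructions must be killed ``one at a time.'' That is not an argument; the theorem is exactly as hard as that unproved step. Worse, there is strong circumstantial evidence that the step cannot be carried out at all: a nowhere-vanishing section of $\mathrm{Sym}_0(E_+(n,\infty))$ is the same as a section of the sphere bundle with fiber $S(F_n^2)$, i.e.\ a statement that the Svarc genus of that bundle equals $1$. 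The paper's machinery (the Floyd/Hsiang--Hsiang degree-zero equivariant self-maps, the mapping-telescope argument of Lemma~\ref{genus}) proves only genus $\le 2$, and then deliberately passes to the join $S(F_n^2)^{(2)}$ and, via Lemma~\ref{2poly}, to spherical polynomials of degree $4$ rather than $2$. If a plain ellipsoid field existed, none of that would be needed; the authors would simply have taken a section of $\mathrm{Sym}_0(E_+)$. So your plan, even if the characteristic-class bookkeeping in low degrees is fine, is almost certainly heading toward an obstruction that does not vanish.

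The paper's route, by contrast, never asserts genus $1$. It exploits the isomorphism $\mathfrak{so}(4)\cong\mathfrak{su}(2)\oplus\mathfrak{su}(2)$ in a different place: it forms $M=S_+\times S_-$ (the product of the unit spheres of traceless quadratic forms on the self-dual and anti-self-dual $3$-planes $E_\pm\subset\Lambda^2\R^4$), shows that the product of Floyd maps gives a null-homotopic $SO(4)$-equivariant self-map of $M$, and applies Lemma~\ref{genus} to get a section of $EM^{(2)}\to BSO(4)$. Lemma~\ref{isotr} (an elementary bivector computation showing that no isotropy group acts transitively on $S^3$) then feeds Lemma~\ref{mapping}, producing an $SO(4)$-equivariant map $M\to S(F_4^d)$; joining, composing, and passing through Lemma~\ref{2poly} yields a section of $ES(F_4^{2d})\to BSO(4)$, i.e.\ a continuous field of non-constant even spherical polynomials of some bounded even degree. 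The convex bodies $\{1+\eps\Phi\}$ are then uniformly non-round. Note the output is a higher-degree polynomial, not a quadratic form. In short: you correctly identified the role of the two projections $SO(4)\to SO(3)$, but you aimed at a strictly stronger statement (a field of non-round ellipsoids) and the obstruction-theoretic heavy lifting needed for it was never done; the paper circumvents it entirely by a join/telescope argument that only needs the genus bound $\le2$ and accepts a degree $>2$ output.
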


On the positive side, in Section \ref{cent} we show that if a polynomial of a fixed odd degree is assigned to every $n$-dimensional subspace of $\R^N$ then, for $N$ large enough, there exists a subspace to which there corresponds the zero  polynomial. This is a non-integrable version of a known result when the family of polynomials is obtained by restricting a single polynomial of $N$ variables to $n$-dimensional subspaces, see \cite{Bi}. We also show in Section \ref{cent} that every continuous family of convex bodies in $E(n,N)$, for $N$ large enough, contains a body that is nearly centrally symmetric. This result is not new: it is proved differently  in \cite{Ma1}. 

Finally, in Section \ref{toricsec}, we show that every continuous family of convex bodies in the fibers of $E_+(n,N)$, for $N$ large enough, contains a body that is nearly symmetric with respect to the maximal torus of the group $SO(n)$. This result is probably new.

Even though the counter-examples presented in this papers show that a purely topological
approach to the Dvoretzky Theorem has very serious limitation, one could ``through in"
some assumption on the modulus of continuity of the family of convex bodies in question.
Since the constructions are based on lifting maps by skeletons in cell complexes, it is
possible that even a mild assumption on the modulus of continuity could save topological
methods; however, the authors did not explore this direction at all.

{\bf Acknowledgments}.  We are grateful to J.C. Alvarez, M. Gromov, 
V. Makeev, V. Milman, and Yu. Zarhin for helpful discussions and historical and 
literature references.

\section{Preparations} \label{prelim}

In this section, we prepare tools needed for constructions of counter-examples in the
following two sections. 

Rather than dealing with general convex bodies, we deal only with convex bodies whose
supporting functions are given by spherical polynomials. It is not difficult to see that this
is not restrictive: indeed, any continuous function can be approximated by spherical
polynomials; on the other hand, given a spherical polynomial $p$, one can see that
the spherical polynomial $1+\eps p$ is  the support function (or radial function) of a 
convex body for all 
sufficiently small $\eps$. Let us proceed with details. 

Let $P^d_n$ denote the space of spherical polynomials of degree at most $d$
on the sphere $S^{n-1}\subset\R^n$. That is, an element of $P^d_n$ is a function
on $S^{n-1}$ obtained as a restriction of a polynomial of degree at most $d$
from $\R^n$ to $S^{n-1}$. Note that the restrictions of different polynomials
in $\R^n$ can give the same spherical polynomial. We regard $P^d_n$
as a subspace of $L^2(S^{n-1})$ and equip it with the induced Euclidean structure.

Let $F^d_n$ denote the subspace of $P^d_n$ consisting of even
(that is, symmetric) functions with zero average on $S^{n-1}$
(in other words, orthogonal to the one-dimensional subspace of constants).
Let $S(F^d_n)$ be the unit sphere in $F^d_n$.
The group $O(n)$ naturally acts on these spaces by isometries.

The meaning of the next lemma is that, for every $k$, there is an 
 canonical (that is $O(n)$-equivariant) map which produces a non-constant spherical
polynomial from a $k$-tuple of non-constant spherical polynomials.

 Let us recall that $A*B$ denote the join of topological spaces $A$ and $B$, 
 that is, the quotient space of $A\times B \times [0,1]$ by the equivalence 
 relation $(a,b_1,0)\sim (a,b_2,0)$ and $(a_1,b,1)\sim (a_2,b,1)$.  If $A$ 
 and $B$ are $G$-spaces, so is $A*B$. Denote $A^{(k)}=A*\dots*A$ ($k$ times).

\begin{lemma}\label{2poly}
For every even $d$ and every positive integer $k$,
there exists an $O(n)$-equivariant map
from $S(F^d_n)^{(k)}$ to $S(F^{kd}_n)$.
\end{lemma}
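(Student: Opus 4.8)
The plan is to use the most naive map available: multiplication of polynomials. Present a point of $S(F^d_n)^{(k)}$ as a formal convex combination $t_1p_1\oplus\cdots\oplus t_kp_k$, where $(t_1,\dots,t_k)$ runs over the standard simplex $\{t_i\ge 0,\ \sum_i t_i=1\}$ and $p_i\in S(F^d_n)$, with the convention that $p_i$ is forgotten whenever $t_i=0$. To such a point assign the spherical polynomial
\[
\Phi(t_1p_1\oplus\cdots\oplus t_kp_k)=\prod_{i=1}^k\bigl(1+t_ip_i\bigr)\in P^{kd}_n .
\]
It is even (a product of even functions), and since the $i$-th factor is identically $1$ when $t_i=0$, the expression does not depend on $p_i$ on that face; hence the evident continuous map $(p_1,\dots,p_k,t_1,\dots,t_k)\mapsto\prod_i(1+t_ip_i)$ on $S(F^d_n)^k\times\{\sum t_i=1\}$ factors through a continuous map on the join. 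Subtracting the mean over $S^{n-1}$ sends $\Phi$ into $F^{kd}_n$, and dividing by the $L^2$ norm sends it into $S(F^{kd}_n)$; this composition is the proposed map.

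The $O(n)$-equivariance is then automatic. The group acts on spherical polynomials by $\sigma\cdot f=f\circ\sigma^{-1}$, and this action is by ring automorphisms that fix the constants, preserve the rotation-invariant probability measure on $S^{n-1}$ (hence commute with taking the mean) and preserve the $L^2$ norm (hence commute with normalization). Consequently $\sigma\cdot\Phi(t_1p_1\oplus\cdots)=\Phi\bigl(t_1(\sigma\cdot p_1)\oplus\cdots\bigr)$, and the same holds after subtracting the mean and normalizing — exactly equivariance for the diagonal $O(n)$-action on the join.

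The one point that genuinely needs an argument — and the one I expect to be the main obstacle — is that the normalization is legitimate, i.e.\ that $\Phi$ is never a constant function on $S^{n-1}$ (equivalently, its mean-zero part never vanishes). Since the $t_i$ sum to $1$, at least one factor $1+t_ip_i$ has $t_i>0$, and it is a genuinely non-constant spherical polynomial because $p_i$ has unit $L^2$ norm and zero mean. So it suffices to know that a product of non-constant spherical polynomials on $S^{n-1}$ (with $n\ge 2$; the case $n=1$ is vacuous since $F^d_1=0$) is non-constant, and I would get this from additivity of the \emph{spherical degree}, the largest $j$ for which a spherical polynomial has a nonzero component in the classical decomposition $P^e_n=\bigoplus_{0\le j\le e}H_j$ into spaces of degree-$j$ spherical harmonics. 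Additivity reduces to the claim that the product of nonzero harmonic homogeneous polynomials of degrees $a$ and $b$ has nonzero degree-$(a+b)$ harmonic component, equivalently is not divisible by $|x|^2=x_1^2+\cdots+x_n^2$; this holds because $|x|^2$ is irreducible for $n\ge 3$ (a quadratic form of rank $\ge 3$) while no nonzero harmonic homogeneous polynomial is divisible by $|x|^2$ (directness of $\R[x_1,\dots,x_n]_{(e)}=\bigoplus_i |x|^{2i}\mathcal H_{e-2i}$), and the case $n=2$ is the elementary statement about degrees of trigonometric polynomials.

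Granting this, $\deg_S\Phi=\sum_{t_i>0}\deg_S(1+t_ip_i)\ge 2>0$, so $\Phi$ is non-constant, the normalization is defined, and the construction yields the desired $O(n)$-equivariant map $S(F^d_n)^{(k)}\to S(F^{kd}_n)$. Everything outside the third paragraph — well-definedness on the join, continuity, and the three equivariances — is routine.
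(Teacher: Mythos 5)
Your construction is exactly the paper's: take the product $\prod_i(1+t_ip_i)$, subtract the mean, and normalize, and the only real content is that the product is non-constant. The paper proves this claim by lifting the factors to homogeneous polynomials on $\R^n$ and noting that if the product were $C\,|x|^{2q}$, then unique factorization plus irreducibility of $x_1^2+\cdots+x_n^2$ would force each factor to be a scalar times a power of $|x|^2$, i.e.\ constant on $S^{n-1}$; your argument via additivity of spherical degree (top harmonic parts multiply to a nonzero top harmonic part, since no nonzero harmonic is divisible by $|x|^2$ and $|x|^2$ is irreducible) is a mild repackaging of the same algebra. One small remark: $x_1^2+x_2^2$ is already irreducible over $\R$, so the $n=2$ case does not actually need to be split off.
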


\begin{proof}
An element of the joint $S(F^d_n)^{(k)}=S(F^d_n)*\dots*S(F^d_n)$
is a tuple $(t_1f_1,t_2f_2,\dots,t_kf_k)$ where $f_i\in S(F^d_n)$,
$t_i\in[0,1]$ and $\sum t_i=1$.
We associate to such a tuple a spherical polynomial
$$
 g = (1+t_1f_1)(1+t_2f_2)\dots(1+t_kf_k)
$$
of degree at most $kd$.
We claim that $g$ is non-constant. If so, we can project it to $F^{kd}_n$ and then
scale it so that the resulting spherical polynomial lies in $S(F^{kd}_n)$. This provides us
with a desired equivariant map.

The above claim follows from the next statement: {\it let $g_1,\dots,g_k$ be non-zero even 
spherical polynomials on $S^{n-1}$, at least one of which is non-constant;
then $g=g_1 \dots g_k$ is non-constant as well}.
Indeed, extend $g_i$'s to $\R^n$ as homogeneous polynomials.
Such a homogeneous extension can be obtained
from an arbitrary extension by multiplying its monomials by appropriate powers
of ${x_1^2+\dots +x_n^2}$ where $x_i$ are the coordinate variables.
Reasoning by contradiction, assume that $g$ equals a constant $C$ on the sphere,
then $g_1\dots g_k=C\cdot(x_1^2+\dots +x_n^2)^q$ for some nonnegative integer~$q$.
Since the ring $\R[x_1,\dots,x_n]$ is a unique factorization domain
and the sum of squares is irreducible,
one concludes that each $g_i$ has a form $C_i\cdot(x_1^2+\dots +x_n^2)^{q_i}$
where $C_i$ is a constant and $q_i$ is a nonnegative integer.
Hence each $g_i$  is constant on the sphere, a contradiction.
\end{proof}

\begin{lemma} \label{mapping}
Let $M$ be a smooth closed manifold smoothly acted upon by a compact group
 $G\subset O(n)$, such that the stabilizer of no point is transitive on  $S^{n-1}$
 (with respect to the natural action of $G\subset O(n$) on the unit sphere).
 Then, for some even $d$, there exists a $G$-equivariant mapping $M \to S(F^d_n)$.
\end{lemma}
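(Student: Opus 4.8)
The plan is to build the equivariant map skeleton by skeleton over a $G$-CW structure on $M$ by equivariant obstruction theory, so that everything reduces to showing that, for $d$ large enough, the fixed-point set $S(F^d_n)^H$ is highly connected for every isotropy subgroup $H$ of the $G$-action on $M$; the hypothesis on stabilizers is precisely what delivers this.

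First I would analyze $S(F^d_n)^H$ for a fixed isotropy subgroup $H\subseteq G$ (which by hypothesis is not transitive on $S^{n-1}$; here and below $n\ge 2$, the case $n=1$ being vacuous). Since $S(F^d_n)^H$ is the unit sphere of the linear subspace $(F^d_n)^H$ of $H$-invariant vectors, it is enough to prove $\dim (F^d_n)^H\to\infty$ as $d\to\infty$. Because the elements of $F^d_n$ are even, $(F^d_n)^H$ sits inside the space of $H$-invariant continuous functions on $\RP^{n-1}=S^{n-1}/\{\pm 1\}$, i.e.\ inside $C(\RP^{n-1}/H)$. Now $H$ is not transitive on $\RP^{n-1}$ either: if it were, then $S^{n-1}$ would be the union of the two $H$-orbits $Hp$ and $H(-p)$, which, being disjoint closed sets (or equal), must coincide by connectedness of $S^{n-1}$ --- contradicting non-transitivity on the sphere. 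Hence the compact Hausdorff orbit space $\RP^{n-1}/H$ has more than one point, and by connectedness it is infinite, so $C(\RP^{n-1}/H)$ is infinite-dimensional. Since the spherical polynomials of all degrees are dense in $C(S^{n-1})$ and the degree-preserving, norm-one operator of averaging over $H$ carries them onto a dense subset of the $H$-invariant functions, the increasing union $\bigcup_d (F^d_n)^H$ is dense in the infinite-dimensional space of $H$-invariant even functions with zero average; an increasing chain of finite-dimensional subspaces whose dimensions are bounded cannot be dense there, so $\dim (F^d_n)^H\to\infty$.

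Next, a compact smooth $G$-manifold has only finitely many conjugacy classes of isotropy subgroups, all of them non-transitive on $S^{n-1}$ by hypothesis, so I can pick a single even $d$ for which $\dim (F^d_n)^H\ge \dim M+1$ for every isotropy subgroup $H$. Then each $S(F^d_n)^H$ is a sphere of dimension $\ge\dim M$, hence non-empty and $(\dim M-1)$-connected. Finally, choose a $G$-CW structure on $M$ (e.g.\ by equivariant triangulation, with the stabilizer of each cell equal to the stabilizer of an interior point) and define a $G$-map $M\to S(F^d_n)$ inductively over the skeleta: on a $0$-cell $G/H$ one needs a point of $S(F^d_n)^H$, which exists; to extend a partial $G$-map over a cell $G/H\times D^k$ one must null-homotope a map $S^{k-1}\to S(F^d_n)^H$, which is possible since $k-1\le\dim M-1$ and $S(F^d_n)^H$ is $(\dim M-1)$-connected. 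The resulting $G$-equivariant map is the one we want.

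I expect the main obstacle to be the connectedness argument identifying ``$H$ not transitive on $S^{n-1}$'' with ``$\RP^{n-1}/H$ infinite'': one must take seriously that the polynomials in question are even (hence the descent to $\RP^{n-1}$) and that this equivalence genuinely uses $n\ge 2$. Everything else --- Stone--Weierstrass together with $H$-averaging, finiteness of orbit types for compact $G$-manifolds, and the standard skeleton-by-skeleton extension of equivariant maps --- is routine.
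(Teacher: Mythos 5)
Your proof is correct, but it takes a genuinely different route from the paper's. You argue via equivariant obstruction theory: equip $M$ with a $G$-CW structure and extend a $G$-map skeleton by skeleton, which reduces the problem to showing that the fixed-point spheres $S(F^d_n)^H$ are $(\dim M-1)$-connected for every isotropy subgroup $H$. This forces you to establish the quantitative statement $\dim(F^d_n)^H\to\infty$ as $d\to\infty$, which you do by a nice density argument (Stone--Weierstrass, $H$-averaging, and the observation that a non-transitive closed subgroup of $O(n)$ cannot be transitive on $\RP^{n-1}$ either, so $C(\RP^{n-1}/H)$ is infinite-dimensional), together with the finiteness of orbit types on a compact smooth $G$-manifold. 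The paper instead covers $M$ by finitely many $G$-invariant tubular neighborhoods $U_i$ of orbits $P_i$ (with equivariant retractions $\pi_{P_i}$), chooses \emph{one} non-constant $G_i$-invariant even spherical polynomial $f_i$ per orbit type, extends $x_i\mapsto g_i$ equivariantly over $P_i$, patches these via a $G$-invariant partition of unity to get a $G$-map $M\to S(F^d_n)^{(k)}$ into the $k$-fold join, and then composes with the map from Lemma~\ref{2poly}. Thus the paper needs only the qualitative fact $\dim(F^d_n)^H\ge 1$ and leans on the join/product construction of Lemma~\ref{2poly}, whereas you need the stronger quantitative growth of $\dim(F^d_n)^H$ but avoid Lemma~\ref{2poly} and the join altogether, producing a map directly into $S(F^d_n)$ for a single well-chosen even $d$. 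Both routes are sound; yours is closer to standard equivariant topology, the paper's is more hands-on and yields an explicit degree bound in terms of the number $k$ of tubes rather than $\dim M$.
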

 
\begin{proof}
For every  $G$-orbit $P$ in $M$, consider a sufficiently small open
$G$-invariant tubular neighborhood $U$
(see \cite{Br} for the existence of such tubes).
If $U$ is small enough, there exists a $G$-equivariant retraction
$\pi_P:U\to P$ (for instance, let $\pi_P(x)$ be the nearest to $x$ point of $P$
with respect to a $G$-invariant Riemannian metric on $M$).

We obtain an open covering of $M$ by tubes $U$;
choose a finite sub-covering by neighborhoods $U_i,\ i=1,\dots,k$, of orbits $P_i$.
Let $\phi_i,\ i=1,\dots,k$ be a $G$-invariant partition of unity associated
with this covering $\{U_i\}$.
Let $G_i\subset G$ be the stabilizer of a point for $P_i$.
Choose a point $x_i \in P_i$ and an even non-constant spherical
polynomial $f_i$ on $S^{n-1}$, invariant under $G_i$;
this is possible since $G_i$ is not transitive on $S^{n-1}$.
Let $g_i\in S(F^d_n)$ be the normalized projection of $f_i$
to $F^d_n$. Here $d$ is the maximum degree of polynomials $f_i$.
Acting by $G$ on $x_i$ and on $g_i$, we  extend the correspondence
$x_i \mapsto g_i$ to an $G$-equivariant map $h_i:P_i\to S(F^d_n)$.
Now define a $G$-equivariant map $h:M\to S(F^d_n)^{(k)}$ by
$$
 h(x) = (\phi_1(x)h_1(\pi_{P_1}(x)), \dots, \phi_k(x)h_k(\pi_{P_k}(x)) ) .
$$
Applying an equivariant map from Lemma \ref{2poly} completes the proof.
\end{proof}

Let $G$ be a compact Lie group and $EG \to BG$ the universal principal $G$-bundle. 
For a $G$-space $F$, let $EF \to BG$ be  the associated bundle with fiber $F$.

\begin{lemma} \label{genus}
Suppose that there exists a $G$-equivariant and homotopically trivial map $f: F \to F$.
Then the bundle $EF^{(2)} \to BG$ admits a continuous section.
\end{lemma}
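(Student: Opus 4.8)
The plan is to translate the statement into a homotopy‑theoretic one and then build the section by equivariant obstruction theory, using $f$ to annihilate the obstructions; the join of two copies of $F$ supplies exactly the extra room one needs. So I would first invoke the standard dictionary: a continuous section of $EF^{(2)}\to BG$ is the same thing as a $G$-equivariant map $EG\to F^{(2)}=F*F$, and $EF^{(2)}\to BG$ is a fibre bundle, hence a fibration, with fibre $F*F$. (Here $F$ must be nonempty, and then $F*F$ is path-connected, the join of two nonempty spaces always being path-connected.)

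Next I would manufacture the tool. Put $g:=f*\mathrm{id}_F\colon F*F\to F*F$; this is $G$-equivariant, being a join of $G$-equivariant maps. I claim $g$ is homotopically trivial: if $f$ is homotopic to the constant map $c$ with value $x_0\in F$, then $g\simeq c*\mathrm{id}_F$, whose image lies in $\{x_0\}*F$, the cone over $F$, which is contractible; hence $g$ is null-homotopic. In particular $g_*=0$ on $\pi_k(F*F)$ for every $k\ge1$. Being $G$-equivariant, $g$ induces a fibre-preserving self-map $\bar g$ of $EF^{(2)}\to BG$ which in every local trivialization acts on the fibre as the single map $g$; consequently, for a partial section $s$ the obstruction cochains of $\bar g\circ s$ are the $g_*$-images of those of $s$, and therefore vanish.

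With this I would run equivariant obstruction theory over a CW model of $BG$; since $G$ is a compact Lie group one may take such a model with finite skeleta $BG^{(0)}\subset BG^{(1)}\subset\cdots$ (for instance from Milnor's construction). A section exists over $BG^{(0)}$, and — $F*F$ being path-connected — over $BG^{(1)}$. In passing from $BG^{(j)}$ to $BG^{(j+1)}$, $j\ge1$, the obstruction is a class in $H^{j+1}(BG;\{\pi_j(F*F)\})$ depending only on the restriction of the section to $BG^{(j-1)}$, and by the previous paragraph composing a section $s$ over $BG^{(j)}$ with $\bar g$ kills it; so $\bar g\circ s$ always extends one skeleton further.

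The step I expect to be the real obstacle is coherence: since $\bar g$ changes the section, the sections produced over successive skeleta need not restrict to one another, and this must be repaired to obtain a single section over all of $BG$. My intended device for this is to pass to the fibrewise mapping telescope $\mathcal T\to BG$ of $\bar g$: its fibre is the mapping telescope of $F*F\xrightarrow{g}F*F\xrightarrow{g}\cdots$, which is contractible exactly because $g$ is null-homotopic — it is the increasing union of its finite sub-telescopes, each a finite chain of cones (hence contractible) and a deformation retract of the next. Thus $\mathcal T\to BG$ is a fibration over a CW base with contractible fibre and admits a section $\tau$. Each finite stage of $\mathcal T$ retracts $G$-equivariantly and fibrewise onto its top stage, a copy of $EF^{(2)}$, and since $BG$ has finite skeleta, over each compact $BG^{(m)}$ the section $\tau$ lands in a finite stage and hence yields, through the retraction, a section $\sigma_m$ of $EF^{(2)}$ over $BG^{(m)}$. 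It then remains to reconcile the $\sigma_m$ into one section over $BG=\bigcup_m BG^{(m)}$; the discrepancy between $\sigma_{m+1}$ and $\sigma_m$ on $BG^{(m)}$ is measured, up to fibre homotopy, by a power of $\bar g$ — which is homotopically trivial — and handling this reconciliation carefully is, I expect, where the main work of the proof lies.
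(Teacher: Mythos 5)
You have correctly identified the two essential ingredients — a mapping telescope to exploit the null-homotopy of $f$, and a transfer to $F*F$ to supply the extra freedom — but the argument stops exactly at the point where the real work must happen, and the device you sketch for closing it would not in fact close. Your telescope $\mathcal T$ is built from $g=f*\mathrm{id}$ on $F*F$; its fibre is contractible, so $\mathcal T\to BG$ has a section $\tau$. But to turn $\tau$ into a section of $EF^{(2)}$ you need a fibrewise map $\mathcal T\to EF^{(2)}$, and the finite-stage retractions do not assemble into one: each finite sub-telescope retracts onto its \emph{last} stage, and these stages march off to infinity, so the resulting $\sigma_m$ land in different copies of $EF^{(2)}$ and there is no retraction of the full $\mathcal T$ onto any one of them. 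Your repair suggestion — that the discrepancy between $\sigma_{m+1}$ and $\sigma_m$ is ``a power of $\bar g$, which is homotopically trivial'' — is circular in the fibrewise setting: a fibrewise self-map of $EF^{(2)}$ can only be fibrewise homotopic to a ``constant'' if that constant is a section, which is precisely what one is trying to produce. So as written the reconciliation is a genuine gap, not a routine detail.

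The paper's proof avoids the issue entirely by choosing the telescope and the target differently. It forms the \emph{bi-infinite} mapping telescope $X$ of $f\colon F\to F$ itself (not of $g$ on $F*F$), indexed by $\Z$, which inherits a $G$-action and is homologically (indeed weakly homotopically) trivial because $f$ is null-homotopic; hence $EX\to BG$ has a section with no coherence problem at all. The transfer to $F*F$ is then achieved by a single globally defined $G$-equivariant map $X\to F*F$, $((x)_i,t)\mapsto(x,f(x),t)$ for $i$ even and $(f(x),x,1-t)$ for $i$ odd; the defining identifications of $F*F$ (where one join coordinate is forgotten at $t=0$ or $t=1$) are exactly what makes this alternating formula match up across the telescope gluings $((x)_i,1)\sim((f(x))_{i+1},0)$. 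Composing the section of $EX$ with this map yields the desired section of $EF^{(2)}$ outright. In short, where your outline tries to ``retract the telescope back down'' and runs into an infinite regress, the paper instead maps the telescope \emph{forward} into the join by an explicit equivariant formula, so no reconciliation across skeleta is ever needed.
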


\begin{proof}
 Consider the mapping telescope (infinite iterated mapping cylinder) $X$ of the map  $f$, that is, the quotient space of 
$\coprod_i  F_i \times [0,1],\ i\in \Z$,
by the equivalence relation $((x)_i,1) \sim ((f(x))_{i+1},0)$, $x\in F$;
here $F_i$ denotes the $i$th copy of the space $F$
and $(x)_i$ the copy of a point $x\in F$ in $F_i$.
Since $f$ is homotopically trivial,
$X$ is homologically trivial. $X$ is naturally acted upon by the group $G$.

Define a $G$-equivariant map $g: X\to F*F$:
for $x\in F$ and $i\in\Z$ set
$$
g((x)_i,t)=
\begin{cases}
 (x,f(x),t) , &\text{$i$ is even}, \\
 (f(x),x,1-t) , &\text{$i$ is odd}.
\end{cases}
$$
Since $(x,f(x),1)=(f(f(x)),f(x),1)$ and $(f(x),x,0)=(f(x),f(f(x)),1)$ in $F*F$,
this map is well-defined and continuous.
Since $X$ is  homologically trivial, the bundle $EX \to BG$ has a section. The composition of this section  with the mapping $g$ yields a section of the bundle $EF^{(2)} \to BG$.
\end{proof}

\begin{remark}
Recall the notion of the Svarc genus $g(E)$ (or the sectional category) of a fiber bundle $E \to B$
with a fiber $F$:  the Svarc genus is the minimal number of open sets that cover the base $B$,
such that there exists a section of the bundle over each of these sets (see \cite{Sv,Ja}).
By one of the basic theorems in \cite{Sv}, $g(E)\leq k$ if and only if the
associated bundle with fiber $F^{(k)}$ has a section. It follows that,
under the assumptions of Lemma \ref{genus}, one has $g(EF)\leq 2$.
\end{remark}

\section{Odd-dimensional counterexample} \label{3D}

The goal of this section is to prove Theorem \ref{3Dex},
the negation of the conjectural Non-Integrable Dvoretzky Theorem in odd dimensions. 

Recall a remarkable construction, due to E. Floyd and described in \cite{C-M}. Consider the irreducible linear action of $SO(3)$ in $\R^5$; this action leaves invariant the unit sphere $S^4$. Floyd constructed an $SO(3)$-equivariant map $f:S^4 \to S^4$ whose degree is zero.

In our situation, we take the space of traceless quadratic forms in $\R^3$ as the 5-dimensional space;
such a form is uniquely determined by its restriction to $S^2\subset\R^3$, hence this space
is $F_3^2$. Then the 4-dimensional sphere from Floyd's construction is $S(F_3^2)\simeq S^4$.

Floyd's construction was generalized to $SO(n)$ for all odd $n$ by
W.-C.\ Hsiang and W.-Y.\ Hsiang \cite{HH}.
Namely there exists a contractible $SO(n)$-equivariant map $f:S(F_n^2)\to S(F_n^2)$,
cf.\ \cite[pp.~716--717]{HH}.

Applying Lemma \ref{genus} with $G=SO(n)$, $BG=G_+(n,\infty)$, $F=S(F_n^2)$
yields a section of the bundle $ES(F_n^2)^{(2)}\to BG=G_+(n,\infty)$.
By Lemma \ref{2poly} there is an $SO(n)$-equivariant map from $S(F_n^2)^{(2)}$
to $S(F_n^4)$, hence the bundle $ES(F_n^4)\to BG=G_+(n,\infty)$ also admits a section.
This means that on every oriented $n$-dimensional subspace $V\subset\R^N$ (with  $N$  arbitrarily large)
one can choose (in a continuous fashion) a non-constant even spherical polynomial $\Phi_V$
of degree at most~4, defined on the $(n-1)$-sphere $S(V)\subset V$.

For a non-oriented $n$-dimensional subspace $W\subset\R^N$, this gives us
an unordered pair $\{\Phi_{W^+},\Phi_{W^-}\}$ of non-constant spherical
polynomials on $S(W)$ where $W^+$ and $W^-$ are the two orientations of $W$.
Arguing as in the proof of Lemma \ref{2poly}, let $\Psi_W$ be the normalized
projection of the product $\Phi_{W^+}\Phi_{W^-}$ to the orthogonal complement
of the subspace of constants. This yields a continuous family $\{\Psi_W\}_{W\in G(n,N)}$
of spherical polynomials (defined on the corresponding $(n-1)$-spheres $S(W)$);
each of them has degree at most 8, zero average and unit $L^2$-norm.

Since $S(F_n^8)$ is compact, the values and first and second derivatives
of the functions $\Psi_W$ are uniformly bounded
(by a constant depending only on $n$). Hence for some $\eps=\eps(n)$
the bodies in subspaces $W$ given by the radial functions $1+\eps\Phi_W$ are convex.
Clearly, these bodies are uniformly separated from round balls
(with respect to the metric $d$).
This finishes the proof of Theorem \ref{3Dex}.

\begin{remark}
In the case $n=3$, a version of this construction yields a family of  convex
sets in 3-dimensional subspaces of $\R^N$ which are centrally symmetric
polyhedra with at most 24 vertices. Here is a description.

Let $Q$ be a traceless quadratic form in a 3-dimensional space $V$,
and let $\lambda, \mu\in\R$ be its eigenvalues having the same sign;
the third eigenvalue is $\nu=-\lambda-\mu$.
Assume first that $Q$ has a simple spectrum, and let $e_1, e_2, e_3$
be the respective unit eigenvectors.
Denote by $C(Q)$ the octahedron which is the convex hull
of the three pairwise orthogonal segments centered at the origin,
having the directions $e_1, e_2, e_3$ and the lengths
$(\lambda-\mu)^2, (\lambda-\mu)^2$ and $\nu^2$, respectively.
The correspondence $Q \mapsto C(Q)$ extends to quadratic forms with multiple spectrum,
that is, to the case when $\mu=\lambda$:
the corresponding octahedron $C(Q)$ then degenerates to a segment.
If $Q=0$ then this segment shrinks to the origin.

The first step of the above proof of Theorem \ref{3Dex} constructs
a section of the bundle $ES(F_3^2)^{(2)}\to BSO(3)=G_+(3,\infty)$.
This means that on every oriented $3$-dimensional subspace
$V\subset\R^N$ (with $N$ arbitrarily large) one has
a pair $(tQ_1,(1-t)Q_2)$ where $t\in[0,1]$ and
$Q_1,Q_2$ are unit-norm traceless quadratic forms on~$V$;
this pair depends continuously on~$V$.
Now assign to every such pair the convex hull the octahedra $C(tQ_1)$ and $C((1-t)Q_2)$.
This provides a continuous field of convex polyhedra with at most 12 vertices
in oriented 3-dimensional subspaces of $\R^N$. In a non-oriented subspace,
take the convex hull of the two polyhedra associated to the corresponding
oriented subspaces.

These polyhedra may degenerate (that is, some of them are planar polygons or line segments)
but they never degenerate to a point. A family of convex bodies (with nonempty ineriors)
can be obtained by taking a neighbourhood of radius~1 of each polyhedron.
\end{remark}

\section{4-dimensional counterexample} \label{4D}

In this section we construct a somewhat more sophisticated example for $n=4$.

Consider the standard linear representation of $SO(4)$ by orthogonal transformations of $\R^4$. 
Consider the 6-dimensional space of bivectors $\Lambda^2 \R^4$. One has the operation $*$ 
on bivectors defined by the formula: $a \wedge b = (*a,b)\  \Omega$ where $\Omega$ is the 
volume form in $\R^4$ and $(\ ,\, )$ is the scalar product. Since $*$ is an involution, 
$\Lambda^2 \R^4$ decomposes as $E_+ + E_-$, where $E_{\pm}$ are the 3-dimensional 
eigenspaces of $*$ with eigenvalues $\pm 1$.  Thus one has two homomorphisms, 
$\rho_{\pm}: SO(4) \to SO(3)$. 

In each space  $E_{\pm}$ consider the 5-dimensional space of traceless quadratic forms, as 
in Section \ref{3D}, and let $S_{\pm}$ be the respective unit 4-dimensional spheres acted 
upon by the group $SO(3)$. Let $M=S_+ \times S_-$. This is an $SO(4)$ space via the 
homomorphism $\rho_+ \times \rho_-: SO(4)\to SO(3)\times SO(3)$, and one has a homotopically 
trivial $SO(4)$-equivariant map $f \times f: M \to M$, the product of the Floyd maps described in 
Section \ref{3D}. 

As before, we use Lemma \ref{genus} to obtain a section of the bundle $EM^{(2)} \to BSO(4)$. We wish to apply Lemma \ref{mapping}, and we need the next result.

\begin{lemma} \label{isotr}
The isotropy subgroups of points under the $SO(4)$-action on $M$ are not transitive on $S^3$. 
\end{lemma}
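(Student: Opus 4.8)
The plan is to analyze the stabilizer of an arbitrary point $(q_+,q_-)\in M=S_+\times S_-$, where $q_\pm$ is a traceless quadratic form on the $3$-dimensional space $E_\pm$. The $SO(4)$-action factors through $\rho_+\times\rho_-\colon SO(4)\to SO(3)\times SO(3)$, which is a $2$-to-$1$ covering. Hence the isotropy subgroup of $(q_+,q_-)$ is the preimage under $\rho_+\times\rho_-$ of $\mathrm{Stab}(q_+)\times\mathrm{Stab}(q_-)$, where each $\mathrm{Stab}(q_\pm)\subset SO(3)$ is the stabilizer of a traceless quadratic form. First I would classify these individual stabilizers: for a traceless form on $\R^3$ with simple spectrum the stabilizer is the finite group of diagonal sign changes (isomorphic to $\Z_2\times\Z_2$ inside $SO(3)$); for a form with a repeated eigenvalue it is a full $O(2)\cong SO(2)\rtimes\Z_2$; and only the zero form has stabilizer all of $SO(3)$, but $q_\pm$ lies on the unit sphere so it is nonzero. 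Thus each $\mathrm{Stab}(q_\pm)$ has dimension $0$ or $1$, and in particular is a proper subgroup of $SO(3)$.

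Next I would bound the dimension of the isotropy subgroup $H\subset SO(4)$ of $(q_+,q_-)$. Since $\rho_+\times\rho_-$ is a local isomorphism (finite kernel), $\dim H=\dim\big(\mathrm{Stab}(q_+)\times\mathrm{Stab}(q_-)\big)\le 1+1=2$. A subgroup of $SO(4)$ acting transitively on $S^3$ must have dimension at least $3$, since the orbit is all of $S^3$ which is $3$-dimensional. Therefore $H$ cannot act transitively on $S^3$, and this already finishes the proof. The argument is robust: it only used that each individual form is nonzero, so no point of $M$ has a large stabilizer.

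The step I expect to require the most care is making the dimension count of $\mathrm{Stab}(q_\pm)$ precise — i.e., verifying that the stabilizer in $SO(3)$ of a nonzero traceless quadratic form is always at most $1$-dimensional. This reduces to checking that a form with simple spectrum has discrete stabilizer (clear, since an element fixing the form must preserve each eigenline) and a form with a double eigenvalue has stabilizer the rotations of the corresponding eigenplane together with one reflection, hence $1$-dimensional. These are elementary facts about conjugacy classes of symmetric matrices under $SO(3)$, but one should state them carefully. Once this is in hand, the passage to $SO(4)$ via the finite covering $\rho_+\times\rho_-$ is immediate, and the transitivity obstruction follows purely from comparing dimensions with $\dim S^3=3$.
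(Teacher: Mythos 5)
Your proof is correct, and it takes a genuinely different route from the paper's. The paper argues by exhibiting an explicit invariant $2$-plane: since each $\mathrm{Stab}(q_\pm)\subset SO(3)$ fixes a line in $E_\pm$, the isotropy group $H\subset SO(4)$ preserves (up to sign) a pair of bivectors $\omega_\pm\in E_\pm$, and then one checks via $\omega_+\wedge\omega_+=\Omega$, $\omega_-\wedge\omega_-=-\Omega$, $\omega_+\wedge\omega_-=0$ that $\omega_++\omega_-$ is decomposable, giving an $H$-invariant $2$-plane in $\R^4$. You instead run a pure dimension count: the classification of stabilizers of nonzero traceless quadratic forms in $SO(3)$ (finite for simple spectrum, $O(2)$-conjugate for a double eigenvalue) gives $\dim\mathrm{Stab}(q_\pm)\le 1$, and since $\rho_+\times\rho_-$ is a finite covering, $\dim H\le 2<3=\dim S^3$, so $H$ cannot act transitively. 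Both arguments are valid and about equally short. The paper's version yields more geometric information (an explicit invariant subspace, hence a natural choice of $G_i$-invariant spherical polynomial in the subsequent application of Lemma \ref{mapping}); yours is cleaner in that it avoids any discussion of whether the stabilizer fixes the bivector itself or only its line, a point the paper glosses over. One small presentational note: you could state explicitly the standard fact that a transitive smooth action of a compact Lie group on a manifold $Y$ requires $\dim\ge\dim Y$ (orbit map $H\to H/H_x\cong Y$), since that inequality is the crux of the final step.
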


\begin{proof}
Assume that  a subgroup  $G\subset SO(3)$ preserves a unit norm traceless quadratic form in $\R^3$. Two of the eigenvalues of this form $\lambda, \mu$ have the same sign, and the third eigenvalue is $\nu=-\lambda-\mu$ and has the opposite sign. The axis of the respective quadratic form, corresponding to the eigenvalue $\nu$, is invariant under $G$. Thus $G$ has an invariant vector.
 
Assume now that  a subgroup $H \subset SO(4)$ preserves a pair of unit norm traceless quadratic forms in $E_+$ and $E_-$.  Then $H$ preserves a pair of bivectors, say, $\omega_+ \in E_+$ and $\omega_- \in E_-$. We claim that $H$ has an invariant 2-plane in $\R^4$, and hence is not transitive on $S^3$.

One has a correspondence between 2-planes in $\R^4$ and decomposable bivectors, considered up to scalar factor, that is, the bivectors $\sigma$ such that $\sigma \wedge \sigma = 0$. The correspondence is as follows: given a 2-plane, choose a basis $(e,f)$ in it, and let $\sigma = e \wedge f$ (cf. \cite {Gl}).

We claim that the bivector $\sigma = \omega_+ + \omega_-$ is decomposable.  Indeed,
$$
\omega_+ \wedge \omega_+ = (\omega_+,\omega_+) \Omega=  \Omega,\quad  \omega_- \wedge \omega_- = (-\omega_-,\omega_-)  \Omega = - \Omega,$$
 and  $\omega_+ \wedge \omega_- = 0.$ Hence $\sigma \wedge \sigma =0$.
It follows that the 2-plane, corresponding to the bivector $\sigma$, is $H$-invariant, as claimed.
\end{proof}

To complete the construction, we now apply Lemma \ref{mapping} and obtain an
$SO(4)$-equivariant map $g:M\to S(F_4^d)$ for some even $d$.
Then there is an $SO(4)$-equivariant map $g*g:M^{(2)}\to S(F_4^d)^{(2)}$
and hence (by Lemma \ref{2poly}) an $SO(4)$-equivariant map $h:M^{(2)}\to S(F_4^{2d})$.
Since the bundle $EM^{(2)} \to BSO(4)$ admits a section, $h$ yields a section
of the bundle $ES(F_4^{2d})\to BSO(4)$. This means that we have associated
to each fiber $F$ of the tautological bundle $E_+(4,N)\to G_+(4,N)$ a non-constant
even spherical polynomial of degree $2d$ on the 3-sphere $S(F)$.
Proceeding as in Section \ref{3D} yields a family of symmetric convex bodies
uniformly separated away from round balls.
This completes the proof of Theorem \ref{4Dex}.

\section{Polynomial approximations and 2-dimensional case} \label{2D}

In this section, we develop certain tools needed in the sequel. We also apply these tools here
to present here yet another proof of the Non-Integrable Dvoretzky Theorem for two-dimensional
sections and derive some corollaries with effective estimates; our argument consider a slightly 
more general case, namely the bodies maybe degenerate and are not required to contain 
the origin. 

Since we consider families of convex bodies that are not required to contain the origin, in the sequel
we use the Hausdorff distance in place of $d$ described in the introduction. To normalize it, we
assume that all bodies in the families are contained in the unit ball centered at the origin. Recall
that the Hausdorff distance $d_h$ between compact sets $B_1$ and $B_2$ in $R^n$ is given by the
formula:
$$
d_h(B_1, B_2)=\inf \{r>0 \  {\rm such\ that} \  B_1 \subseteq U_r(B_2)  \ {\rm and}\  
B_2  \subseteq U_r(B_1) \},   
$$
where $U_r(A)$ denotes the $r$-neighborhood of a set $A$.
The domain of $d$ (the class of convex bodies containing the origin in their interior) 
is open with respect to $d_h$.
One easily sees that $d$ and $d_h$ define the same topology
on this domain.
Moreover, if $B$ is the unit ball centered at the origin,
then $d(A,B)/d_h(A,B)\to 1$ as $A\to B$ (w.r.t.\ $d$ or $d_h$).

One can characterize a compact convex set $A\subset\R^n$ by its support function
$h_A:S^{n-1} \to \R$,
the signed distance from the origin to the support hyperplane orthogonal to a given direction.
More precisely,
$$
 h_A(v) = \sup\{ \langle v,x\rangle : x\in A \}, \qquad v\in S^{n-1}
$$
where $\langle,\rangle$ is the scalar product in $\R^n$.

Recall that $P_n^d$ denotes the space of spherical polynomials
of degree at most $d$, regarded as a subspace of $L^2(S^{n-1})$.
Denote by $\pi_n^d:L^2(S^{n-1})\to P_n^d$ the orthogonal
projection to this subspace.
The following lemma allows us to study symmetries of spherical polynomials
instead of those of convex sets.

\begin{proposition}\label{harmonics}
Let $G \subset O(n)$ be a compact subgroup of the orthogonal group.
For every $n\ge 2$ and every $\eps>0$ there exists a positive integer $d$
such that the following holds.
If a compact convex set $A\subset\R^n$ is contained in the unit ball
centered at the origin and the function $\pi_n^d(h_A)$ is $G$-invariant,
then $A$ lies within $d_h$-distance $\eps$ from a $G$-invariant convex set. 
\end{proposition}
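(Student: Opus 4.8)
The plan is to exploit compactness of the space of convex bodies and a standard averaging argument, using the group $G$ to symmetrize both the convex set and the relevant spherical polynomial. First I would argue by contradiction: if the statement fails, then for every $d$ there is a convex set $A_d \subset \R^n$ inside the unit ball such that $\pi_n^d(h_{A_d})$ is $G$-invariant, yet $A_d$ stays at $d_h$-distance at least $\eps$ from every $G$-invariant convex set. By Blaschke's selection theorem the sequence $A_d$ has a subsequence converging in $d_h$ to a compact convex set $A_\infty$ inside the unit ball; since support functions depend continuously on the body (in $d_h$, hence in $L^2(S^{n-1})$), $h_{A_d}\to h_{A_\infty}$ in $L^2$.

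Next I would show $h_{A_\infty}$ is $G$-invariant. The key point is that $\pi_n^d$ converges pointwise (in $L^2$) to the identity as $d\to\infty$, i.e.\ for fixed $f\in L^2(S^{n-1})$, $\pi_n^d(f)\to f$; this is just the statement that spherical polynomials are dense. Combining this with $h_{A_d}\to h_{A_\infty}$ requires a small uniformity argument: given $\eta>0$, pick $d_0$ so that $\|\pi_n^{d_0}(h_{A_\infty}) - h_{A_\infty}\|<\eta$ and so that $\|h_{A_d}-h_{A_\infty}\|<\eta$ for $d$ in the subsequence with $d\ge d_0$; using that each $\pi_n^d$ is a norm-$1$ projection and $\pi_n^{d_0}\pi_n^d=\pi_n^{d_0}$ for $d\ge d_0$, one estimates $\|\pi_n^{d_0}(h_{A_d}) - h_{A_\infty}\|$ to be $O(\eta)$. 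Since $\pi_n^{d_0}(h_{A_d})$ is the $\pi_n^{d_0}$-truncation of a higher-degree $G$-invariant polynomial $\pi_n^d(h_{A_d})$, and $\pi_n^{d_0}$ commutes with the $G$-action (as $G\subset O(n)$ acts by isometries preserving each $P_n^{d_0}$), the function $\pi_n^{d_0}(h_{A_d})$ is itself $G$-invariant. Letting $\eta\to 0$ shows $h_{A_\infty}$ is a limit of $G$-invariant functions, hence $G$-invariant, hence $A_\infty$ is a $G$-invariant convex set. But $d_h(A_d,A_\infty)\to 0$ contradicts $d_h(A_d, A_\infty)\ge\eps$.

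The main obstacle is the uniformity step in the previous paragraph: a priori each $A_d$ only knows that its \emph{own} degree-$d$ truncation is symmetric, and one must transfer this to a \emph{fixed} lower degree $d_0$ and pass to the limit without losing symmetry. The resolution, as sketched, is that $\pi_n^{d_0}\circ\pi_n^d=\pi_n^{d_0}$ for $d\ge d_0$, so $\pi_n^{d_0}(h_{A_d})=\pi_n^{d_0}(\pi_n^d(h_{A_d}))$ is the image of a $G$-invariant function under the $G$-equivariant projection $\pi_n^{d_0}$, hence $G$-invariant; and $\pi_n^{d_0}(h_{A_d})\to\pi_n^{d_0}(h_{A_\infty})$ in $L^2$ by continuity of $\pi_n^{d_0}$, so $\pi_n^{d_0}(h_{A_\infty})$ is $G$-invariant for every $d_0$, and these converge to $h_{A_\infty}$. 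One subtlety worth noting is that one must check $A_\infty$ has nonempty interior is \emph{not} needed — the proposition only asks that $A$ (equivalently its limit) lies near a $G$-invariant convex set, and the zero body or lower-dimensional limits are harmless since the $G$-invariant sets form a $d_h$-closed class. A cleaner, non-compactness alternative would be to directly average: set $\tilde A$ to be the convex set with support function the $G$-average of $h_A$ (which is a valid support function, being a positive combination of the $G$-translates of a support function), and estimate $d_h(A,\tilde A)$ in terms of $\|h_A - \pi_n^d(h_A)\|$ plus a modulus-of-continuity bound converting $L^2$-closeness of support functions to $d_h$-closeness of bodies; this avoids Blaschke selection but requires the (standard but slightly technical) inequality bounding $d_h$ by the sup-norm of the difference of support functions together with a Sobolev-type estimate controlling the sup-norm of a bounded-degree spherical polynomial by its $L^2$-norm. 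I would present the contradiction-plus-compactness version as the primary argument since it is shorter and the density of spherical polynomials plus equivariance of $\pi_n^d$ are exactly the inputs already set up in the paper.
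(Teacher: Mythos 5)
Your primary argument (Blaschke selection plus contradiction) is correct, and it is genuinely different from the paper's proof. You negate the statement, extract a $d_h$-convergent subsequence $A_d \to A_\infty$, and show $A_\infty$ is $G$-invariant by the clean observation that for $d\ge d_0$ the function $\pi_n^{d_0}(h_{A_d})=\pi_n^{d_0}\bigl(\pi_n^d(h_{A_d})\bigr)$ is the image of a $G$-invariant function under the $G$-equivariant map $\pi_n^{d_0}$, so it is $G$-invariant; passing to the limit in $d$ and then in $d_0$ (using density of spherical polynomials and the fact that the $G$-invariant subspace of $L^2$ is closed) gives $G$-invariance of $h_{A_\infty}$, contradicting that $A_d$ stays $\eps$-far from all $G$-invariant sets. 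The monotonicity $\pi_n^{d_0}\circ\pi_n^d=\pi_n^{d_0}$ and the equivariance of the projections are the right engine here, and your remark that degenerate limits are harmless is correct.

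The paper instead gives a direct, constructive argument: it proves a \emph{uniform} polynomial approximation bound over the compact set $\mathcal H$ of support functions (Lemma~\ref{L2-uniform}), converts $L^2$-closeness to $C^0$-closeness for pairs of $1$-Lipschitz support functions via the elementary estimate $\|f-g\|_{C^0}\le C\|f-g\|_{L^2}^{2/(n+1)}$ (Lemma~\ref{C0viaL2}), and then exhibits the nearby $G$-invariant body explicitly as the one whose support function is the Haar average $\int_G h_A\circ\gamma\,d\mu(\gamma)$. The tradeoff: your route is shorter and avoids the $C^0/L^2$ interpolation lemma, but is non-constructive and non-quantitative; the paper's route produces the approximating $G$-invariant body explicitly and yields an effective relation between $d$ and $\eps$ (modulo the compactness hidden in Lemma~\ref{L2-uniform}).

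One small inaccuracy in your sketched ``alternative'': you say the averaging route would need a Sobolev-type bound of the sup-norm of a bounded-degree spherical polynomial by its $L^2$-norm. That is not what the paper uses and not what is needed. What is needed is a sup-norm/$L^2$ comparison for the difference $h_A - h_A\circ\gamma$, and this follows from the uniform Lipschitz bound on support functions of bodies in the unit ball (Lemma~\ref{C0viaL2}), not from any estimate on polynomials. Once you have $\|h_A-h_A\circ\gamma\|_{L^2}\le 2\|h_A-\pi_n^d(h_A)\|_{L^2}$ from the $G$-invariance of $\pi_n^d(h_A)$, the Lipschitz interpolation gives the $C^0$ bound and hence the $d_h$ bound to the Haar-averaged body, which is exactly the paper's proof.
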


\begin{proof}
We need the following well-known properties of support functions, cf. e.g. \cite{Sc}:

(i) The Hausdorff distance between convex sets is equal to the $C^0$ distance 
between support functions, namely:
$$
 d_h(A,B) = \|h_A-h_B\|_{C^0} = \sup_{v\in S^{n-1}} |h_A(v)-h_B(v)|
$$
for every pair of compact convex sets $A$ and $B$.

(ii) A function on the sphere $S^{n-1} \subset \R^n$ is the support function of a
convex set if and only if its positive homogeneous extension to $\R^n$ is a
convex function. As a corollary, a function which is a limit of a sequence of 
support functions is a support function as well. 

Let $\cal H$ denote the set of support functions of all compact convex subsets
of the unit ball in $\R^n$. One easily sees that every function from $\cal H$
is 1-Lipschitz. Therefore $\cal H$ is compact in $C^0$ and hence in $L^2$.

Since spherical polynomials are dense in $L^2(S^{n-1})$,
for every $f\in \cal H$
we have $\pi_n^d(f)\to f$ in $L^2(S^{n-1})$ as $d\to\infty$.
The following lemma asserts that this convergence is uniform on $\cal H$.

We need the following two standard technical lemmas:

\begin{lemma}\label{L2-uniform}
For every $\eps>0$ there is a positive integer $d=d(n,\eps)$
such that ${\|f-\pi_n^d(f)\|_{L^2}<\eps}$ for all $f\in\cal H$.
\end{lemma}

\begin{proof}
Suppose the contrary.
Then there is an $\eps>0$ such that for every integer $d>0$
there is a function $f_d\in\cal H$ such that
$$
 \|f_d-\pi_n^d(f_d)\|_{L^2}\ge \eps.
$$
Note that the left-hand side of this inequality
is the $L^2$-distance from $f_d$ to the subspace $P_n^d$.
Thus $f_d$ is $\eps$-separated from $P_n^d$ in $L^2$.
By compactness, a subsequence $\{f_{d_i}\}$ of $\{f_d\}$
converges to a function $f\in\cal H$ in $L^2$.
Since the subspaces $P_n^d$ are nested and
$f_{d_i}$ is $\eps$-separated from $P_n^{d_i}$,
one concludes that $f$ is $\eps$-separated from $P_n^d$ for all~$d$.
This contradicts  to the density of spherical polynomials in $L^2$.
\end{proof}

\begin{lemma}\label{C0viaL2}
There is a constant $C=C(n)$ such that
$$
 \|f-g\|_{C^0}\le C \|f-g\|_{L^2}^{2/(n+1)}
$$
for all $f,g\in\cal H$.
\end{lemma}

\begin{proof}
Recall that $f$ and $g$ are 1-Lipshitz and 
and their absolute values are bounded above by~1.
Denote $h=f-g$ and $a=\|h\|_{C^0}$.
Then $h$ is 2-Lipschitz and $a\le 2$.
Let $x_0\in S^{n-1}$ be a point such that $|h(x_0)|=a$.
Then $|h(x)|\ge a/2$ for all $x\in S^{n-1}$
lying within the ball of radius $a/4$ centered at $x_0$.
The $(n-1)$-volume of this ball in $S^{n-1}$ is bounded from
below by $C_1a^{n-1}$ for a suitable constant $C_1=C_1(n)>0$.
Therefore
$$
 \|h\|^2 = \int_{S^{n-1}} h^2 \ge C_1a^{n-1} (a/2)^2 = C_1 a^{n+1}/4 .
$$
Hence
$$
 \|h\|_{C^0} = a \le C\|h\|^{2/(n+1)}
$$
for $C=(4/C_1)^{1/(n+1)}$.
\end{proof}

Now return to the proof of Proposition \ref{harmonics}.
For an $\eps>0$, choose $d=d(n,\eps_1)$ from Lemma \ref{L2-uniform}
for $\eps_1=\frac12(\eps/C)^{(n+1)/2}$ where
$C$ is the constant from Lemma \ref{C0viaL2}.
Now let $A$ be a convex set in the unit ball in $\R^n$ and
denote  $f=h_A\in\cal H$.
Assume that $g=\pi_n^d(f)$ is $G$-invariant,
then for every $\gamma\in G$ we have
$\pi_n^d(f\circ\gamma)=g\circ\gamma=g$,
hence
$$
 \|f-f\circ\gamma\|_{L^2} \le \|f-g\|_{L^2}+\|f\circ\gamma-g\|_{L^2} \le 2\eps_1
$$
by the choice of~$d$. Then by Lemma \ref{C0viaL2},
\begin{equation}
\label{C0}
 \|f-f\circ\gamma\|_{C^0} \le (2\eps_1)^{2/(n+1)} =\eps .
\end{equation}
Now consider a function $F$ on $S^{n-1}$ defined by
$$
 F(x) = \int_G f\circ\gamma(x) \, d\mu(\gamma) 
$$
where $\mu$ is the probability Haar measure on $G$.
Since all functions $f\circ\gamma$ have homogeneous convex
extensions, so does $F$. Therefore $F$ is the support function
of a convex set $B$.
The definition of $F$ and \eqref{C0} imply that
$\|f-F\|_{C^0} \le \eps$ and hence $d_h(A,B)\le\eps$.

This completes the proof of Proposition \ref{harmonics}.
\end{proof}

From now on we study two-dimenional sections.
Let ${\cal F}$ be a continuous family of (non-empty) compact convex sets
in the fibers of the bundle $E_+(2,N) \to G_+(2,N)$.

The following proposition implies that if $N$ is large enough then every family ${\cal F}$
contains a nearly round disk centered at $O$.

\begin{proposition} \label{Fourier}
For each $d$ and $N \geq d$, every  continuous family ${\cal F}$ of compact convex sets
in the fibers of the bundle $E_+(2,N) \to G_+(2,N)$ contains a set whose support function 
has a constant  Fourier polynomial of degree $d$, i.e., is free of the first $d$ harmonics.
\end{proposition}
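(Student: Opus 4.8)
The plan is to pass from the geometric family $\cal F$ to an associated equivariant topological problem over $BG_+(2,N)$ and solve it by a cohomological obstruction/dimension count, exactly in the spirit of the constructions in Sections \ref{3D} and \ref{4D} but now producing a \emph{positive} result. First I would replace each convex set by its support function, so that $\cal F$ becomes a continuous section $s$ of the bundle with fiber $\cal H$ (the space of support functions of convex subsets of the unit ball in $\R^2$) associated to the principal $SO(2)$-bundle $ESO(2)\to G_+(2,\infty)$, restricted to the finite-dimensional base coming from $N$. Composing with the orthogonal projection $\pi_2^d:L^2(S^1)\to P_2^d$ I get a section of the associated bundle whose fiber is the finite-dimensional $SO(2)$-representation $P_2^d$. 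Splitting off the $SO(2)$-invariant constants, the problem becomes: show that the section of the vector bundle with fiber $W:=P_2^d\ominus\R$ (the sum of the harmonics of degrees $1,\dots,d$) associated to $E_+(2,N)$ must vanish somewhere, i.e. that this bundle has no nowhere-zero section when $N$ is large.

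The key computation is then the Euler class. The bundle $E_+(2,N)\to G_+(2,N)$ is the tautological oriented $2$-plane bundle, an oriented real $2$-plane bundle, equivalently a complex line bundle; call its Euler (first Chern) class $x\in H^2(G_+(2,N);\Z)$. Under the standard identification $SO(2)\cong U(1)$, the $k$-th harmonic space on $S^1$ is the real $2$-plane on which $SO(2)$ acts by rotation at speed $k$, so the associated bundle is the complex line bundle $L^{\otimes k}$ with Euler class $kx$. Hence the bundle with fiber $W$ is $\bigoplus_{k=1}^{d} L^{\otimes k}$, a rank-$d$ complex (rank-$2d$ real) bundle whose top (real) Euler class is $\prod_{k=1}^{d}(kx)=d!\,x^{d}\in H^{2d}(G_+(2,N);\Z)$. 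Now $H^*(G_+(2,N);\Z)$ contains $\Z[x]/(x^{?})$ with $x^{d}\neq 0$ once $N\ge d$ (equivalently, once the Grassmannian, or the ambient projective-space-like cell structure, is high-dimensional enough to support $x^d$); more simply, restricting along a linear embedding $\CP^{d-1}\hookrightarrow G_+(2,N)$ classifying $L$ over $\CP^{d-1}$, the class $d!\,x^d$ pulls back to $d!$ times the generator of $H^{2d}(\CP^{d-1})=0$ — wait, that is zero, so instead I use $\CP^{d}$: over $\CP^{d}$ the tautological line bundle has $x^{d}\ne 0$, and the embedding $\CP^{d}\hookrightarrow G_+(2,N)$ classifying it exists as soon as $N\ge 2(d+1)$, certainly for $N$ large. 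On this $\CP^d$ the Euler class of $\bigoplus_{k=1}^d L^{\otimes k}$ is $d!\,x^{d}\ne 0$, so that bundle has no nowhere-zero section; a fortiori the original bundle over $G_+(2,N)$ has none. Thus $\pi_2^d(h_A)$ is a constant for some $A$ in the family.

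Translating back: the section $s$ composed with projection to $W$ must hit zero, which says precisely that for some subspace $V$ the support function $h_{A_V}$ has vanishing harmonics of degrees $1$ through $d$, i.e. $\pi_2^d(h_{A_V})$ is constant — which is the assertion, and this is the content I would need. (For the ``nearly round disk'' consequence advertised before the Proposition, one then lets $d\to\infty$ and invokes Proposition \ref{harmonics} with $G=SO(2)$, using compactness of $\cal H$ to extract a limiting family member whose support function is genuinely rotation-invariant, hence a disk; but the Proposition itself only asks for the harmonic-vanishing statement.)

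\textbf{Main obstacle.} The delicate point is honestly identifying which associated bundle over $G_+(2,N)$ one gets and pinning down the cohomology of the base: one must be careful that $\pi_2^d$ is $SO(2)$-equivariant (clear, since $\pi_2^d$ is $O(2)$-equivariant), that the resulting finite-dimensional bundle is really $\bigoplus_{k\le d}L^{\otimes k}$ with the right weights, and — the crux — that the relevant characteristic class $d!\,x^{d}$ is genuinely nonzero in $H^{2d}$ of the base for $N$ large, which forces the quantitative bound on $N$ (the statement claims $N\ge d$ suffices, so the efficient route is to work directly with the infinite Grassmannian $G_+(2,\infty)\simeq \CP^\infty$ where $x$ has infinite height and then note the relevant cells already appear by $N\ge d$). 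Everything else — the passage support functions $\leftrightarrow$ convex sets, the averaging at the end — is routine and covered by the lemmas already proved above.
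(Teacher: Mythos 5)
Your proof proposal is essentially the paper's own argument: replace each convex set by its support function, orthogonally project onto $P_2^d$, discard the constants to land in the $SO(2)$-module $\bigoplus_{q=1}^d H_2^q$, identify the resulting associated bundle as $\bigoplus_{q=1}^d \xi^{\otimes q}$ where $\xi$ is the tautological complex line bundle over $G_+(2,N)$, compute its Euler class as $d!\,e^d$, and conclude that the section must vanish. This is exactly the route taken in Section~\ref{2D}.

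The one place where you and the paper diverge is the verification that $d!\,e^d \neq 0$ in $H^{2d}(G_+(2,N);\Z)$, and here both accounts are a little loose. Your route — pulling back along an embedded $\CP^d\hookrightarrow G_+(2,N)$ classifying $\xi$ — needs $N\ge 2(d+1)$, which is weaker than the claimed $N\ge d$; your closing remark that ``the relevant cells already appear by $N\ge d$'' is where the quantitative content should live, but it is not actually argued. The paper simply cites \cite{M-S}. In fact, since $G_+(2,N)$ is a smooth manifold of real dimension $2(N-2)$ (it is diffeomorphic to the complex quadric $Q_{N-2}\subset\CP^{N-1}$, with $e$ a unit multiple of the hyperplane class), the group $H^{2d}(G_+(2,N);\Z)$ already vanishes unless $N\ge d+2$, and $e^d\neq 0$ precisely when $N\ge d+2$. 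So the honest bound from this argument is $N\ge d+2$, not $N\ge d$; this affects neither the content of the Proposition nor the theorem that follows (where only $N$ large is used), but it is worth flagging rather than glossing over, and you should not assert that $\CP^d$ embeds classifying $\xi$ for $N$ as small as $d$. Everything else in your write-up — the equivariance of $\pi_2^d$, the identification of the $q$-th harmonic subbundle with $\xi^{\otimes q}$, the passage from the vanishing section back to the statement about the support function — matches the paper and is correct.
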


\begin{proof}
For a positive integer $q$,  
denote by $H^q_2$ the subspace of $P^q_2$ spanned by the functions
$\cos (q\alpha)$ and $\sin (q\alpha)$ where $\alpha$ is the angular coordinate in $S^1$.
Then the subspace $\overline P^d_2:=\bigoplus_{q=1}^d H^q_2$ of $P^d_2$ is the orthogonal
complement to the subspace of constants.
Since $SO(2)=U(1)$, we may consider  $E_+(2,N) \to G_+(2,N)$ as a 1-dimensional 
complex bundle, $\xi$. The first Chern class $c_1(\xi)$ is the Euler class $e$ of the bundle 
$E_+(2,N) \to G_+(2,N)$.

The bundle $\xi^q$ has the full Chern class  $c(\xi^q)=1+qe$; its sections are 
homogeneous complex polynomials of degree $q$. Considered as a real bundle, 
this is the bundle of $q$th harmonics  with fiber $H^q_2$. 

We claim that the bundle $\eta$ with fiber
 $\overline P^d_2$, associated with the bundle $E_+(2,N) \to G_+(2,N)$, has 
 no non-vanishing sections. Indeed,  $\eta=\sum_{q=1}^d \xi^q$, and its highest 
 characteristic class  is $d! e^d \in H^{2d}(G_+(2,N),\Z)$. If $N\geq d$, this  
 is a non-zero class (see, e.g, \cite{M-S}).

Finally, given a family ${\cal F}$, assign to each convex body the Fourier polynomial 
of degree
$d$ of its support function minus its free term. This yields a section of the bundle 
$\eta$ that must
have zeroes, and the result follows.
\end{proof}

\begin{corollary}
For each $d$ and $N \geq d$, every  continuous family ${\cal F}$ of non-degenerate
compact convex sets
in the fibers of the bundle $E_+(2,N) \to G_+(2,N)$ contains a body whose boundary has
at least $2d+2$ intersections (counted with multiplicities) with a circle centered at the origin.
\end{corollary}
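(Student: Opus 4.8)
The plan is to deduce the corollary directly from Proposition~\ref{Fourier} by reading off the geometric meaning of ``the support function has no harmonics of degree $\le d$''. So first I would invoke Proposition~\ref{Fourier} with the given $d$ and $N\ge d$ to obtain a convex set $A$ in some fiber $V\cong\R^2$ of $E_+(2,N)$ whose support function $h_A:S^1\to\R$ has the form $h_A=c+H$, where $c$ is a constant and $H$ is $L^2$-orthogonal to all of $1,\cos\alpha,\sin\alpha,\dots,\cos(d\alpha),\sin(d\alpha)$; in other words $h_A(\alpha)=c+\sum_{q>d}(a_q\cos q\alpha+b_q\sin q\alpha)$, with $c>0$ since $A$ is non-degenerate (its circumradius is positive).

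Next I would translate ``many intersections of $\partial A$ with a circle centered at $O$'' into a statement about the function $\rho(\alpha)=h_A(\alpha)-c$. The circle of radius $c$ centered at the origin is tangent to the support line of $A$ in direction $\alpha$ precisely when $h_A(\alpha)=c$, i.e.\ when $\rho(\alpha)=0$; more to the point, the number of intersection points of $\partial A$ with the circle $\{|x|=c\}$ is controlled below by the number of sign changes of $h_A-c$ on $S^1$ (a direction $\alpha$ with $h_A(\alpha)>c$ forces a boundary point outside the circle, $h_A(\alpha)<c$ forces one inside, and each transition between these regimes contributes at least one crossing of $\partial A$ with the circle). Here I would be a little careful about degenerate behaviour --- tangencies, or arcs of $\partial A$ lying on the circle --- and note that such configurations only increase the intersection count once multiplicities are taken into account, so it suffices to bound the number of sign changes of $\rho$.

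The key input is then the classical Sturm–Hurwitz theorem: a real function on $S^1$ whose Fourier expansion starts at frequency $q=d+1$ (i.e.\ is free of the first $d$ harmonics and of the constant) has at least $2(d+1)=2d+2$ sign changes on the circle, unless it is identically zero. Since $A$ is non-degenerate, $\rho=h_A-c$ cannot be identically zero (otherwise $A$ is the disk of radius $c$, whose boundary meets a concentric circle in infinitely many points, so the conclusion holds trivially anyway). Applying Sturm–Hurwitz to $\rho$ gives at least $2d+2$ sign changes, hence at least $2d+2$ intersections of $\partial A$ with the circle $\{|x|=c\}$, counted with multiplicity. Choosing this circle as the one in the statement finishes the proof.

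The main obstacle is the second step: making the passage from ``sign changes of $h_A-c$'' to ``intersections of $\partial A$ with the circle, counted with multiplicity'' fully rigorous, especially at points where the boundary is not smooth (corners of $A$) or where $h_A-c$ vanishes without changing sign (the support line is tangent to the circle but $A$ stays on one side). The cleanest fix is probably to first reduce to the generic smooth case --- approximate $A$ within the family by a smooth strictly convex body whose support function still lacks the first $d$ harmonics, which one can arrange by the same projection trick used in Lemma~\ref{2poly} and Section~\ref{prelim} --- establish the bound there, and then pass to the limit, since intersection numbers counted with multiplicity are lower semicontinuous under Hausdorff convergence. Everything else is a direct citation of Proposition~\ref{Fourier} and of Sturm–Hurwitz.
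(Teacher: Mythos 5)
Your proposal follows the paper's own route essentially verbatim: invoke Proposition~\ref{Fourier} to get a body whose support function is $c$ plus harmonics of degree $>d$, apply Sturm--Hurwitz to $h_A-c$, and translate its $\ge 2d+2$ sign changes into intersections of $\partial A$ with the circle of radius~$c$. The paper's proof is exactly this, with the same justification for $c>0$ (the first-harmonic / Steiner-point observation) stated slightly differently.

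One remark on the step you correctly flag as the "main obstacle." The difficulty is not really smoothness or degeneracy, so the smoothing-and-limit fix you sketch is somewhat beside the point; even for $C^\infty$ strictly convex bodies one must argue \emph{why} the transitions alternate along $\partial A$. The clean way to see it: at a local extremum $\alpha^*$ of $h_A$ one has $h_A'(\alpha^*)=0$, so the support point $p_A(\alpha^*)=h_A(\alpha^*)\alpha^*$ lies on the ray through $\alpha^*$ and therefore $r_A(\alpha^*)=h_A(\alpha^*)$ (here $r_A$ is the radial function, defined because the vanishing of the first harmonic puts the Steiner point, hence the origin, in the interior of $A$). Between consecutive sign changes of $h_A-c$ there is a local max where $r_A=h_A>c$ on the positive arcs and a local min where $r_A=h_A<c$ on the negative arcs, and these alternate cyclically, so $r_A-c$ has at least as many sign changes as $h_A-c$. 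This is what justifies the paper's "Hence" and your "each transition contributes a crossing."
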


\begin{proof} Consider the support function with a constant  Fourier polynomial of degree $d$
and subtract its constant term $c$.
The resulting function has no fewer   zeros than its first non-trivial harmonic, that is,
at least $2d+2$ zeros (see, e.g., \cite{O-T} on this Sturm-Hurwitz theorem).
Hence the boundary of the respective body intersects the circle of radius $c$ at least $2d+2$
times (the fact that $c>0$ follows from two facts: (i) if $O$ is inside the body then the support
function is positive, and therefore its average value, $c$, is positive as well;
(ii) parallel translating the origin results in adding a first harmonic to the support function,
which does not change the average value).  
\end{proof}

As another corollary, we have the following non-integrable 2-dimensional Dvoretzky theorem.

\begin{theorem}
For any $\eps >0$, there exists a number $N$ such that for any $n\geq N$ and any  
 continuous family ${\cal F}$ of compact convex sets
in the fibers of the bundle $E_+(2,N) \to G_+(2,N)$
there exists $F \in {\cal F}$ which is either a single point at the origin
or contains the origin in the interior and lies within $d$-distance $\eps$
from a  Euclidean disc centered at the origin.
\end{theorem}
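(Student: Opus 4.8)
The plan is to feed the two main results of this section into one another. Proposition~\ref{Fourier} produces, inside any family, a body whose support function carries only harmonics of order exceeding a prescribed $d$, and Proposition~\ref{harmonics}, applied with $G=SO(2)$, guarantees that killing enough low harmonics forces a body to be Hausdorff-close to an $SO(2)$-invariant convex set. Since the only $SO(2)$-invariant compact convex subsets of $\R^2$ are the discs centered at the origin (with the degenerate disc $\{O\}$ appearing as the limiting case), this already delivers a member of the family that is nearly a centered round disc; the remaining work is to convert the Hausdorff estimate into an estimate for the multiplicative distance $d$ and to check the dichotomy in the statement.

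Concretely, I would first fix $\eps>0$. Since $d$, the condition ``$O$ lies in the interior'', and the property of being a centered disc are all scale-invariant and the base Grassmannian is compact, there is no loss in assuming that every member of $\mathcal F$ lies in the closed unit ball about the origin. Pick an auxiliary $\eps'>0$, to be pinned down at the end, and let $d=d(2,\eps')$ be the integer produced by Proposition~\ref{harmonics} for $G=SO(2)$. Set $N=d$. Then for ambient dimension $n\ge N$ (so $n\ge d$), Proposition~\ref{Fourier} applies to $\mathcal F$ and yields a body $F\in\mathcal F$ whose support function has a constant Fourier polynomial of degree $d$; that is, $\pi_2^{\,d}(h_F)$ is a constant function on $S^1$, hence $SO(2)$-invariant. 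By Proposition~\ref{harmonics}, $F$ is within $d_h$-distance $\eps'$ of an $SO(2)$-invariant compact convex set, which must be a disc $B_r$ of some radius $r\ge 0$ centered at the origin.

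Now I would split on the size of $r$. If $r\le\sqrt{\eps'}$, then $F\subseteq U_{\eps'}(B_r)\subseteq B_{2\sqrt{\eps'}}$, so $F$ is $d_h$-close to the single point $\{O\}$; shrinking $\eps'$ so that $2\sqrt{\eps'}<\eps$ places us in the first alternative of the statement. If $r>\sqrt{\eps'}$, then $F\subseteq B_{r+\eps'}$ is immediate from $d_h(F,B_r)\le\eps'$; for the reverse inclusion, for each unit vector $w$ the point $rw\in B_r$ has some $p_w\in F$ with $|p_w-rw|\le\eps'$, whence $|p_w|\ge r-\eps'$ and the angle between $p_w$ and $w$ is at most $\arcsin(\eps'/r)$, so $\langle p_w,w\rangle\ge (r-\eps')\cos\!\bigl(\arcsin(\eps'/r)\bigr)\ge r-2\eps'$ for $\eps'$ small; since this holds for every $w$ and $F$ is convex, $F\supseteq B_{r-2\eps'}$. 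In particular $O\in\mathrm{int}(F)$, and $d(F,B_r)\le\log\frac{r+\eps'}{r-2\eps'}\le\frac{3\eps'}{r-2\eps'}$, which is at most a constant multiple of $\sqrt{\eps'}$ because $r>\sqrt{\eps'}$. Choosing $\eps'$ small enough that both $2\sqrt{\eps'}<\eps$ and this last quantity is $<\eps$ completes the argument.

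All the topological content is already packaged in Propositions~\ref{Fourier} and~\ref{harmonics}, so I expect the only genuinely delicate point to be this final passage from $d_h$ to $d$ near the degenerate case — in particular verifying that $O$ actually lands in the interior of $F$ when the limiting disc is non-degenerate, and tracking that every threshold depends on $\eps$ alone (through $\eps'$, then $d=d(2,\eps')$, then $N=d$).
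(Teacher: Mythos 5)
Your argument starts the same way as the paper's (combine Propositions~\ref{Fourier} and~\ref{harmonics} to get a body $d_h$-close to a centered disc, then pass from $d_h$ to $d$), but the normalization step you chose creates a genuine gap in your Case~1. You normalize by a \emph{uniform} scaling (``the base Grassmannian is compact''), putting all bodies in the unit ball, and then split on whether the radius $r$ of the approximating disc is $\le\sqrt{\eps'}$ or $>\sqrt{\eps'}$. When $r\le\sqrt{\eps'}$ you conclude that $F$ is $d_h$-close to $\{O\}$ and claim this ``places us in the first alternative.'' It does not: the first alternative requires $F$ to \emph{be} the single point $\{O\}$, not to be $d_h$-close to it. After a uniform scaling, the body $F$ produced by Proposition~\ref{Fourier} could perfectly well be a tiny non-round convex set (a small triangle, say, which can have $\pi_2^d(h_F)$ constant). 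Such an $F$ is not $\{O\}$, need not contain $O$ in its interior, and is far from any disc in the scale-invariant metric $d$, so neither alternative of the theorem holds. Thus the proof as written does not establish the stated theorem, only the weaker version in which the first alternative is replaced by ``$F$ is $d_h$-close to $\{O\}$.''

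The fix is the one the paper takes: scale each body \emph{individually} so that the maximum of its support function equals~$1$. (First dispose of the trivial case in which some $F_V=\{O\}$, which is exactly the first alternative; having excluded it, $\max h_{F_V}$ is a positive continuous function of $V$ on a compact base, so dividing by it is a continuous, scale-invariant operation.) After this normalization the approximating disc satisfies $|r-1|\le\eps'$, so $r$ is automatically bounded away from~$0$, your Case~1 never occurs, and Case~2 (which is correct, though you can streamline it: $\|h_F-r\|_{C^0}\le\eps'$ directly gives $B_{r-\eps'}\subseteq F\subseteq B_{r+\eps'}$) finishes exactly as you wrote. This is also what the paper's terse remark ``$d$ and $d_h$ define the same topology in a neighborhood of the unit disc'' is appealing to.
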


\begin{proof}
We may assume that the sets are scaled so that the maxima of
their support functions are equal to 1. Then Propositions \ref{harmonics}
and \ref{Fourier} imply the statement with $d_h$ in place of~$d$.
Then the theorem follows from the fact that $d$ and $d_h$
define the same topology in a neighborhood of the unit disc.
\end{proof}

\section{Odd polynomials} \label{cent}

Let $d$ be an odd positive integer.
Consider the bundle $S^d(E(n,N))$ of homogeneous polynomials of degree $d$ 
and the bundle
$$
Q^d(E(n,N))=S^{1}(E(n,\infty)) \oplus S^{3}(E(n,\infty))\oplus\dots\oplus S^{d}(E(n,\infty))
$$
of all odd polynomials of degree at most~$d$
associated  with the 
tautological $n$-dimensional bundle $E(n,N) \to G (n,N)$.

\begin{proposition} \label{odd}
For every odd $d$ and every $n$ there exists $N$ such that for every $n\geq N$ the bundle
$Q^d(E(n,N))$ has no non-vanishung sections.
\end{proposition}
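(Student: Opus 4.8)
The plan is to obstruct a nowhere-vanishing section by a Stiefel--Whitney class computation. If a real vector bundle $\zeta$ of rank $r$ admits a nowhere-vanishing section, then it splits off a trivial line bundle, so its top Stiefel--Whitney class $w_r(\zeta)$ vanishes. Hence it suffices to prove that $w_r\bigl(Q^d(E(n,N))\bigr)\neq 0$ in $H^r(G(n,N);\Z/2)$, where
\[
 r=\operatorname{rk}Q^d(E(n,N))=\sum_{j\ \mathrm{odd},\,1\le j\le d}\binom{n+j-1}{j}
\]
is the rank of $Q^d$. I would first establish this over $BO(n)=G(n,\infty)$ and then transfer it down to $G(n,N)$ for $N$ large.

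Over $BO(n)$ I would invoke the splitting principle (equivalently, restrict along $BO(1)^n\hookrightarrow BO(n)$, which is injective on mod-$2$ cohomology): after pulling back, $E(n,\infty)$ becomes a sum $L_1\oplus\dots\oplus L_n$ of real line bundles, and with $x_i:=w_1(L_i)$ the cohomology ring upstairs contains $\Z/2[x_1,\dots,x_n]$. Using $S^j(L_1\oplus\dots\oplus L_n)=\bigoplus_{1\le i_1\le\dots\le i_j\le n}L_{i_1}\otimes\dots\otimes L_{i_j}$ together with the fact that $w_1$ of a tensor product of real line bundles is the sum of the $w_1$'s, the top Stiefel--Whitney class of $S^j(E)$ pulls back to
\[
 \prod_{1\le i_1\le\dots\le i_j\le n}\bigl(x_{i_1}+\dots+x_{i_j}\bigr),
\]
and hence $w_r(Q^d(E))$ pulls back to the product of these expressions over all odd $j$ with $1\le j\le d$, a polynomial of degree $r$ in $\Z/2[x_1,\dots,x_n]$.

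The crucial observation is that each factor $x_{i_1}+\dots+x_{i_j}$ is a \emph{nonzero} element of $\Z/2[x_1,\dots,x_n]$: the coefficient of a given $x_i$ in it equals, mod $2$, the multiplicity of $i$ among $i_1,\dots,i_j$, and since these multiplicities sum to the odd integer $j$, at least one of them is odd, so the linear form does not vanish. Because $\Z/2[x_1,\dots,x_n]$ is an integral domain, the product of all these linear forms is nonzero; therefore $w_r(Q^d(E(n,\infty)))\neq 0$ in $H^r(BO(n);\Z/2)$. Finally, the restriction $H^r(BO(n);\Z/2)\to H^r(G(n,N);\Z/2)$ is an isomorphism once $N-n\ge r$, since $G(n,N)$ agrees with $BO(n)$ through a range of dimensions growing with $N$; for such $N$ we obtain $w_r(Q^d(E(n,N)))\neq 0$, so $Q^d(E(n,N))$ has no nowhere-vanishing section. (Monotonicity in $N$ is immediate: a nowhere-zero section over $G(n,N')$ restricts along $G(n,N)\hookrightarrow G(n,N')$ to a nowhere-zero section of $Q^d(E(n,N))$.)

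I do not expect a serious obstacle: this is a routine Stiefel--Whitney computation. The one genuinely essential use of the hypothesis is the nonvanishing mod $2$ of the linear factors $x_{i_1}+\dots+x_{i_j}$, which is exactly what the oddness of the degrees buys; the remaining care is only in recording the stable range for $G(n,N)\to BO(n)$ so that ``$N$ large'' is justified.
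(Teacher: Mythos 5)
Your proof is correct and follows essentially the same route as the paper: splitting principle, top Stiefel--Whitney class of each $S^j(E)$ written as a product of linear forms $\sum_k j_k x_k$ over partitions of $j$, oddness of $j$ forcing each factor to be nonzero mod $2$, and a stable-range argument to descend from $BO(n)$ to $G(n,N)$ for $N$ large. The only cosmetic difference is the final nonvanishing check: the paper evaluates the symmetric polynomial at $x_1=\dots=x_n=1$ to get $d^M\equiv 1\pmod 2$, whereas you observe that each linear factor has an odd coefficient and invoke that $\Z/2[x_1,\dots,x_n]$ is an integral domain.
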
 

\begin{proof}
Using the splitting principle, write $E(n,\infty)=\xi_1 \oplus \dots \oplus \xi_n$
where $\xi_i$ are linear bundles with the full Stiefel-Whitney classes $w(\xi_i)=1+x_i$.
Then  $w_j(E(n,\infty))=\sigma_j(x)$, the elementary symmetric function of $x_1,\dots,x_n$.
Therefore
$$
S^d(E(n,\infty))=\sum_{\sum_k j_k=d,\  j_k\geq 0} \xi_1^{j_1} \otimes\dots\otimes \xi_n^{j_n},
$$
hence
$$
w(S^d(E(n,\infty)))=\Pi_{\sum_k j_k=d} \left(1+\sum j_k x_k\right),
$$ 
and the top Stiefel-Whitney class is
$$
P_d:=\Pi_{\sum_k j_k=d} \left(\sum j_k x_k\right).
$$
$P$ is a symmetric polynomial $x_j$ with coefficients in $\Z_2$. We claim that $P\neq 0$.
Indeed, if each $x_j=1$ then $P=d^M$ where 
$$
M=\binom{d+n-1}{n-1},
$$
the number of solutions of the equation $\sum j_k=d$.  
Since $d$ is odd, $P_d=1$.

The ring $H^*(G (n,\infty),\Z_2)$ is the ring of symmetric polynomials in variables
$x_1,\dots,x_n$, therefore $P_d\neq 0$.
Hence the top Stiefel-Whitney class $P=P_1P_3\dots P_d$ of $Q^d(E(n,N))$ is nonzero.
It follows that $P\neq 0$ in $H^*(G (n,N),\Z_2)$ for sufficiently large $N$. 
Hence the bundle $Q^d(E(n,N))$ has no non-vanishing sections.
\end{proof}

Proposition \ref{odd} implies the following non-integrable theorem that is already known due to V. Makeev, see \cite[Theorem 6]{Ma1}.

\begin{theorem} \label{sym}
For any $\eps >0$ and any $n$ there exists a number $N$ such that for any $n\geq N$ and any  
continuous family $\cal F$ of compact convex sets contained in the unit balls centered at the origin in
the fibers of the bundle $E (n,N) \to G (k,N)$ there 
exists a set $F \in {\cal F}$ whose $d_h$-distance  from a centrally symmetric set with respect 
to the origin is less than $\eps$.
\end{theorem}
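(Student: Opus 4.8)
The plan is to reuse the mechanism behind Proposition~\ref{harmonics}, now with $G=\{\pm I\}\subset O(n)$, and to feed the resulting obstruction bundle into Proposition~\ref{odd}.

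A nonempty compact convex set $A\subset\R^n$ is centrally symmetric about the origin exactly when its support function $h_A$ is even, i.e.\ invariant under the antipodal involution $v\mapsto-v$ of $S^{n-1}$, which is precisely the action of the subgroup $G=\{\pm I\}$ of $O(n)$. So I would first apply Proposition~\ref{harmonics} to this $G$ and the given $\eps$: it yields a degree $d$ such that, whenever $A$ lies in the unit ball and $\pi_n^d(h_A)$ is even, $A$ is $d_h$-within $\eps$ of a centrally symmetric convex set. I may enlarge $d$ to make it odd, since if $\pi_n^d(h_A)$ is even then so is $\pi_n^{d'}(h_A)=\pi_n^{d'}(\pi_n^d(h_A))$ for every $d'\le d$ (it is the further orthogonal projection of an even function onto the antipodally invariant subspace $P_n^{d'}$), so the conclusion of Proposition~\ref{harmonics} persists with the larger odd $d$.

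Next I would isolate the obstruction. On $S^{n-1}$ the space $P_n^d$ is the linear span of the spherical harmonics of degree at most $d$, so ``$\pi_n^d(h_A)$ is even'' says exactly that the $L^2$-projection of $h_A$ onto the odd part $\mathcal O_V\subset P_n^d(V)$, spanned by the odd-degree harmonics, vanishes. Let $\mathcal O\to G(n,N)$ be the $O(n)$-associated subbundle with fibre $\mathcal O_V$ of the bundle of spherical polynomials of degree at most $d$ associated with $E(n,N)$. Given a family ${\cal F}$, the map $V\mapsto$ (odd part of $\pi_n^d(h_{F_V})$), where $F_V\in{\cal F}$ is the set in the fibre over $V$, is then a continuous section of $\mathcal O$. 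On the other hand, restriction of homogeneous polynomials to the unit sphere (where $x_1^2+\dots+x_n^2=1$) defines a surjective $O(n)$-equivariant bundle map $Q^d(E(n,N))\to\mathcal O$, where $Q^d(E(n,N))=S^1(E(n,N))\oplus S^3(E(n,N))\oplus\dots\oplus S^d(E(n,N))$ is the bundle of odd polynomials of degree at most $d$ from Section~\ref{cent}; writing $\mathcal K$ for the kernel and using the Whitney formula one gets $w_{\mathrm{top}}(Q^d)=w_{\mathrm{top}}(\mathcal K)\,w_{\mathrm{top}}(\mathcal O)$. The proof of Proposition~\ref{odd} shows that for all sufficiently large $N$ the top Stiefel--Whitney class of $Q^d(E(n,N))$ is nonzero in $H^*(G(n,N);\Z_2)$, and hence $w_{\mathrm{top}}(\mathcal O)\neq0$, so $\mathcal O$ has no non-vanishing section.

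For such $N$, then, the section $V\mapsto$ (odd part of $\pi_n^d(h_{F_V})$) must vanish at some $V_0\in G(n,N)$; thus $\pi_n^d(h_{F_{V_0}})$ is even and, by the choice of $d$, $F_{V_0}$ lies within $d_h$-distance $\eps$ of a set centrally symmetric about the origin. Since $d$, and hence $N$, depends only on $n$ and $\eps$, this $N$ works. I do not expect a genuinely hard step: the substance is pre-packaged in Propositions~\ref{harmonics} and~\ref{odd}, and the two things needing care are the geometric translation ``even truncated support function $\Rightarrow$ nearly centrally symmetric'' (which is exactly Proposition~\ref{harmonics}) and the bundle-theoretic observation that $\mathcal O$ inherits a nonzero top Stiefel--Whitney class from $Q^d(E(n,N))$ via the restriction surjection.
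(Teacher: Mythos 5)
Your proof is correct and follows essentially the same route as the paper's (approximate the support function by a spherical polynomial, invoke Proposition~\ref{odd} as the topological obstruction, then apply Proposition~\ref{harmonics} with $G=\{\pm I\}$). The paper's one-line sketch elides the link between the bundle $Q^d(E(n,N))$ of odd polynomials and the bundle of odd spherical polynomials that actually carries the section; your observation that restriction to the unit sphere gives a surjective bundle map $Q^d\to\mathcal O$, so that the Whitney formula transfers the nonzero top Stiefel--Whitney class to $\mathcal O$, is a clean way to supply that missing step (equivalently one could lift the odd spherical harmonics to harmonic polynomials to obtain a section of $Q^d$ directly).
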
 

\begin{proof}
The proof proceeds along the same lines as in Section \ref{2D}. One characterizes a convex body
by its support function $S^{n-1}\to \R$.
The support function is approximated by a spherical polynomial,
a polynomial is  decomposed into the even and odd parts,
and Proposition \ref{odd} implies that the odd part vanishes for some body $F\in {\cal F}$. 
Applying Proposition \ref{harmonics} with $G=\{Id,-Id\}$ completes the proof.
\end{proof}

Another immediate consequence of Proposition \ref{odd} is a theorem on zero linear subspaces of odd polynomials.

\begin{theorem} \label{flat}
For every odd $d$ and a positive integer $n$ there exists $N$ such that the following holds.
For every continuous family $\{F_V\}$ of odd polynomial functions $F_V:V\to\R$ of degree at most~$d$, where $V$ ranges over
all $n$-dimensional subspaces of $\R^N$, there exists a subspace $V\subset \R^N$ such that $F_V\equiv 0$.
\end{theorem}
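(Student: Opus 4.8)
The plan is to recognize Theorem~\ref{flat} as a direct reformulation of Proposition~\ref{odd}. The first step is to identify the data of the theorem with a section of the bundle $Q^d(E(n,N))$. For a fixed $n$-dimensional subspace $V\subset\R^N$, an odd polynomial function $F_V\colon V\to\R$ of degree at most $d$ is exactly an element of
$$
S^1(V)\oplus S^3(V)\oplus\dots\oplus S^d(V),
$$
because the condition $F_V(-x)=-F_V(x)$ kills all homogeneous components of even degree, and no odd degree exceeds $d$. By the definition of $Q^d(E(n,N))$, this vector space is precisely the fiber of $Q^d(E(n,N))\to G(n,N)$ over $V$. Thus a continuous family $\{F_V\}_{V\in G(n,N)}$ is nothing but a continuous section of $Q^d(E(n,N))$.

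Given this, the second step is to apply Proposition~\ref{odd}: for $N$ sufficiently large the bundle $Q^d(E(n,N))$ has no non-vanishing section, since its top Stiefel--Whitney class $P_1P_3\cdots P_d$ is nonzero in $H^*(G(n,N);\Z_2)$. Fixing such an $N$, the section determined by $\{F_V\}$ must vanish at some $V_0\in G(n,N)$, and vanishing of that section at $V_0$ means exactly that $F_{V_0}$ is the zero vector of the fiber, i.e.\ $F_{V_0}\equiv 0$ on $V_0$. This is the assertion of the theorem.

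The only point requiring verification is that the passage from a continuous family of functions to a section of the direct-sum bundle is legitimate --- that is, that the fiberwise splitting of a polynomial into its homogeneous odd-degree parts depends continuously on $V$. This is immediate: the projections onto homogeneous components are linear and $O(n)$-equivariant, so they assemble into genuine bundle maps between the summands of $S^d(E(n,N))$, and continuity of the section follows. Consequently there is no real obstacle here: all of the topology has already been absorbed into the Stiefel--Whitney class computation of Proposition~\ref{odd}.
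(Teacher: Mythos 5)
Your proposal is correct and follows exactly the paper's route: the paper itself presents Theorem~\ref{flat} as an ``immediate consequence'' of Proposition~\ref{odd}, relying on precisely the identification you spell out between continuous families $\{F_V\}$ of odd polynomials and sections of $Q^d(E(n,N))$. Your only addition is making explicit the continuity check for the fiberwise homogeneous decomposition, which is a reasonable thing to note but not a divergence from the paper's argument.
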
 

In particular, one may start with a polynomial $F$ of odd degree on $\R^N$,
and let $F_V$ be the restriction of $F$ to the subspace $V$.
Then the assertion of Theorem  \ref{flat} is  due to Birch, see \cite{Bi,Wo} and \cite{A-H},
and to Debarre and Manivel \cite{D-M}.
Thus Theorem \ref{flat} is a non-integrable version of the Birch theorem.

\section{Toric symmetry} \label{toricsec}

In this final section, we prove a non-integrable Dvoretzky-type theorem (Theorem \ref{toric})
for symmetries with respect to the maximal torus $T^{[n/2]}\subset SO(n)$
(and therefore for all its subgroups). 
This generalizes the results of Makeev for the group $\Z_p$ (which actually implies
the toric symmetry for 1-dimensional torus). Of course, the two-dimensional Dvoretzky
Theorem is a particular case of Theorem \ref{toric}.

We say that a body $B \subset \R^n$ has a $T^{[n/2]}$-symmetry if it is invariant under 
the action of a maximum torus of $SO(n)$ (acting linearly on $\R^k$ in the
standard way), that is a subgroup of  $SO(n)$ conjugate to 
a standard torus  $T^{[n/2]}  \subset SO(n)$.

\begin{theorem} \label{toric}
For any $\eps >0$ and any $n$, there exists a number $N$
such that for any  continuous family of compact convex sets ${\cal F}$
in unit balls centered at the origin in the fibers of the bundle $E_+(n,N) \to G_+(n,N)$
 there exists $F \in {\cal F}$ whose $d_h$-distance from a set 
having a $T^{[n/2]}$-symmetry is less than $\eps$.
\end{theorem}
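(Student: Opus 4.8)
The strategy mirrors the reductions used in Sections \ref{2D} and \ref{cent}: pass from convex bodies to their support functions, approximate by spherical polynomials (Proposition \ref{harmonics}), and reduce the symmetry statement to a purely bundle-theoretic non-existence-of-section statement, which in turn follows from Lemma \ref{mapping} together with Lemmas \ref{genus} and \ref{2poly}, applied to $G=SO(n)$ acting on $G_+(n,N)$. First I would fix $\eps>0$ and apply Proposition \ref{harmonics} with $G=T^{[n/2]}$ to obtain a degree $d$ so that it suffices to find, in the family, a body whose support function has $T^{[n/2]}$-invariant degree-$d$ projection. Because the maximal tori of $SO(n)$ are all conjugate, ``$T^{[n/2]}$-invariant'' for the fiber over an oriented subspace $V$ means invariant under \emph{some} maximal torus of the copy of $SO(n)$ acting on $V$; so what we really need is an $SO(n)$-equivariant section of the bundle over $G_+(n,N)=BSO(n)$ (in the limit $N\to\infty$) whose fiber is the set of spherical polynomials in $S(F^d_n)$ that are invariant under some maximal torus.

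The key point is therefore to produce an $SO(n)$-equivariant map from the appropriate source into the ``toric-invariant'' subset of $S(F^d_n)$. Here is where the maximal torus enters concretely. Let $M$ be the homogeneous space $SO(n)/T^{[n/2]}$ (the flag-type manifold parametrizing maximal tori), on which $SO(n)$ acts with isotropy groups exactly the maximal tori. A maximal torus of $SO(n)$ is certainly not transitive on $S^{n-1}$ (its orbits are tori of dimension at most $[n/2]<n-1$ for $n\ge 3$; for $n=2$ the theorem is the already-proved 2-dimensional case, so assume $n\ge 3$), so Lemma \ref{mapping} applies and yields, for some even $d_0$, an $SO(n)$-equivariant map $M\to S(F^{d_0}_n)$. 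Crucially, by the construction in Lemma \ref{mapping} the image polynomial over a point with isotropy $T$ is built from a $T$-invariant polynomial, hence lands in the $T$-invariant locus; so we get an equivariant map $M\to \Sigma$, where $\Sigma\subset S(F^{d}_n)$ (for a suitable even $d$) is the set of spherical polynomials invariant under \emph{some} maximal torus. What remains is a section of the associated bundle $E M\to BSO(n)$, or equivalently a continuous field over $G_+(n,N)$ of maximal tori (a ``reduction of structure group'' type statement); composing such a section with the equivariant $M\to\Sigma$ produces the desired toric-invariant polynomial field, and then Proposition \ref{harmonics} finishes the argument exactly as in Theorems \ref{sym} and the 2-dimensional theorem.

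The main obstacle is producing the section of $EM\to BSO(n)$ — i.e., continuously choosing a maximal torus over every point of $G_+(n,N)$. This cannot hold over all of $BSO(n)$ (that would trivialize too much), so the real input must be the genus bound coming from Lemma \ref{genus}: one needs a homotopically trivial $SO(n)$-equivariant self-map of $M$ (or of a space equivariantly related to $M$), so that Lemma \ref{genus} gives a section of $EM^{(2)}\to BSO(n)$, which via the join-and-multiply trick of Lemma \ref{2poly} still suffices to build the polynomial field. This is exactly the pattern of Sections \ref{3D} and \ref{4D}: the Floyd/Hsiang–Hsiang contractible equivariant maps on $S(F^2_n)$ for odd $n$, and its square for $n=4$, are the hard analytic-topological ingredient, and here one would want an analogous contractible equivariant self-map adapted to the torus action on $M$ — or, alternatively, one uses that $M=SO(n)/T$ already has small enough cohomology (e.g. its rational cohomology is generated in low degrees and the relevant characteristic classes of $EM$ vanish in a range) to get the needed section directly. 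Verifying that such a contractible equivariant map exists, or equivalently pinning down the precise characteristic-class computation showing $g(EM)\le 2$, is the step I expect to require the most care; the rest is a routine repackaging of the machinery already developed in the paper.
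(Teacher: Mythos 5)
Your reduction to polynomial support functions via Proposition \ref{harmonics} is correct, but from that point on the logical direction is backwards, and the gap is fatal.

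You say that ``what we really need is an $SO(n)$-equivariant section of the bundle over $G_+(n,N)$ whose fiber is the set of spherical polynomials \ldots invariant under some maximal torus,'' and you then set about \emph{constructing} such a section (via Lemma \ref{mapping} applied to $M=SO(n)/T$, Lemma \ref{genus}, etc.). But producing one particular field of toric-invariant polynomials over the Grassmannian says nothing about the \emph{given arbitrary} family $\cal F$: it just exhibits one nice family, not a property of all families. The construct-a-section pattern of Lemmas \ref{genus} and \ref{mapping} is exactly what Sections \ref{3D} and \ref{4D} use, because there the goal is to build a \emph{counterexample}; here, as in Sections \ref{2D} and \ref{cent}, the theorem is a universal statement and the argument must instead prove the \emph{non-existence} of a certain section. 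Concretely: project $h_F$ to $P^d_n$ for each $F\in\cal F$; if no $F$ had a $T$-invariant projection, you would obtain a section of the bundle with fiber $X=P^d_n \setminus (P^d_n)^{T}$, the complement of the torus-fixed set, and you must show that this bundle has no section.

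That non-existence is the genuinely new ingredient, and it is not supplied by anything you listed. The paper gets it from Hsiang's localization-type result (Proposition \ref{fix}): since $T=T^{[n/2]}$ acts on $X$ with no fixed points, $\pi^*:H^*_T(\mathrm{pt};\Q)\to H^*_T(X;\Q)$ has nontrivial kernel, so the Borel bundle $X_T\to BT$ has no section; and since a section of the associated $SO(n)$-bundle over $BSO(n)$ would pull back (along $BT\to BSO(n)$) to a section over $BT$, the $SO(n)$-bundle has none either, and for $N$ large the restriction to $G_+(n,N)$ still has none. Your proposal never invokes this (or any substitute obstruction-theoretic input), and the Floyd/Hsiang--Hsiang contractible equivariant maps you suggest looking for would actually work \emph{against} you: a successful application of Lemma \ref{genus} produces sections, which is what you must rule out, not what you want to supply.
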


\begin{proof} Recall some material on cohomological theory of topological transformation groups, see \cite{Hs}.
Let $G$ be a compact group and $X$ a topological $G$-space. Let $EG\to BG$ be the universal
principal $G$-bundle. If the action of $G$ on $X$ is not free, the quotient space $X/G$ may be
topologically unsatisfactory. The following  construction of homotopical quotient is due to A. Borel:
$X_G=(E\times X)/G$ where the action of $G$ on $E\times X$ is diagonal.
The space $X_G$ fibers over $BG$ with fiber $X$; this fiber bundle is associated with $EG\to BG$. 
 The equivariant cohomology $H_G^*(X)$ are defined as $H^*(X_G)$.
In particular, $H_G^*(pt)=H^*(BG)$, and one has a homomorphism $\pi^*: H_G^*(pt) \to H_G^*(X)$
induced by the projection $\pi: X_G \to BG$.

We need the following result from \cite{Hs}.
Let $G=T^r$, and  let $X$ be  a paracompact $G$-space  with finite cohomology dimension. 

\begin{proposition}[\cite{Hs}, Chapter 4.1, Corollary 1]
\label{fix}
The fixed point set $X^G$ of the group $G$ in $X$ is non-empty if and only 
if $\pi^*: H_G^*(pt; \Q) \to H_G^*(X;\Q)$ is a monomorphism.
\end{proposition}

With this preparation, we are ready to prove the theorem. 
Let $G= T^{[n/2]}$ and $X=P^d_n - (P^d_n)^G$,
the set of spherical polynomials of degree at most $d$ with the $G$-symmetric polynomials deleted.
We claim that the bundle $\pi: X_G \to BG$ has no sections. Indeed, the action of $G$ on $X$
has no fixed points, therefore, by Proposition \ref{fix}, the homomorphism
$\pi^*: H^*(BG) \to H^*(X_G)$ has a non-trivial kernel. Hence no section exists.

Now consider the bundle $E_+X\to G_+(n,\infty)=BSO(n)$ with fiber $X$,
associated with the universal principal $SO(n)$-bundle $E_+(n,\infty)\to G_+(n,\infty)$.
Since $G\subset SO(n)$, the former bundle does not have sections.
It follows that the restriction of this bundle to $G_+(n,N)$, with $N$ large enough,
does not have sections either.
In other words, if $N$ is large enough, every section of the bundle $P^d(E)\to G_+(n,N)$
of spherical polynomials of degree at most $d$ assumes a value which is
a spherical polynomial enjoying  a $T^{[n/2]}$-symmetry.
Now the theorem follows from Proposition \ref{harmonics}.
\end{proof}

\end{document}